\newcommand{\Zz}{\mathbb{Z}}
\newcommand{\Qq}{\mathbb{Q}}
\theoremstyle{plain}
\newtheorem{theorem}{Theorem}[section]    
\newtheorem{twisting lemma}[theorem]{Twisting lemma}
\newtheorem{lemma}[theorem]{Lemma}       
\newtheorem{proposition}[theorem]{Proposition}  
\newtheorem{corollary}[theorem]{Corollary}   
\theoremstyle{remark}
\newtheorem{definition}[theorem]{Definition}      
\def\cm{\hbox{\hbox{\rm C}\kern-5pt{\raise 1pt\hbox{$|$}}}}
\def\lhfl#1#2{\smash{\mathop{\hbox to 12mm{\leftarrowfill}}
\limits^{#1}_{#2}}}
\def\rhfl#1#2{\smash{\mathop{\hbox to 12mm{\rightarrowfill}}
\limits^{#1}_{#2}}}
\def\build#1_#2^#3{\mathrel{
\mathop{\kern 0pt#1}\limits_{#2}^{#3}}}
\def\htrait#1#2{\smash{\mathop{\hbox to 12mm{\hrulefill}}
\limits^{#1}_{#2}}}
\def\sxbullet{{\raise 2pt\hbox{\bf .}}}
\numberwithin{equation}{section}
\begin{document}

\title{Twists of superelliptic curves without rational points}

\author{Fran\c cois Legrand}

\email{flegrand@post.tau.ac.il}

\address{School of Mathematical Sciences, Tel Aviv University, Ramat Aviv, Tel Aviv 6997801, Israel}

\address{Department of Mathematics and Computer Science, the Open University of Israel, Ra'anana 4353701, Israel}

\date{\today}

\maketitle

\begin{abstract}
Let $n\geq 2$ be an integer, $F$ a number field, $O_F$ the integral closure of $\Zz$ in $F$ and $N$ a positive multiple of $n$. The paper deals with degree $N$ polynomials $P(T) \in O_F[T]$ such that the superelliptic curve $Y^n=P(T)$ has twists $Y^n=d\cdot P(T)$ without $F$-rational points. We show that this condition holds if the Galois group of $P(T)$ over $F$ has an element which fixes no root of $P(T)$. Two applications are given. Firstly, we prove that the proportion of degree $N$  polynomials $P(T) \in O_F[T]$ with height bounded by $H$ and such that the associated curve satisfies the desired condition tends to 1 as $H$ tends to $\infty$. Secondly, we connect the problem with the recent notion of non-parametric extensions and give new examples of such extensions with cyclic Galois groups.
\end{abstract}

\section{Introduction}

\subsection{Topic of the paper} Let $n\geq 2$ be an integer, $F$ a number field, $O_F$ the integral closure of $\Zz$ in $F$ and $N$ a positive multiple of $n$. 
Let $$P(T) = a_0 + a_1 T + \cdots + a_{N-1} T^{N-1} + a_NT^N$$
be a polynomial with degree $N$, coefficients in $O_F$ and whose roots all have multiplicity at most $n-1$. In the sequel, the curve
$$C_{P(T)}:Y^n = P(T,Z)=a_0 Z^N + a_1 Z^{N-1} T + \cdots + a_{N-1} Z T^{N-1} + a_NT^N$$ in the weighted projective space $\mathbb{P}_{N/n,1,1}$ is called {\it{the superelliptic curve associated with $P(T)$}}. Given $d \in O_F \setminus \{0\}$, the {\it{d-th twist of $C_{P(T)}$}} is the superelliptic curve $$C_{d \cdot P(T)}:Y^n = d \cdot P(T,Z)$$ associated with the polynomial $d \cdot P(T)$. We refer to \S2.1 for basic terminology.

The purpose of the present paper consists in giving sufficient conditions on the polynomial $P(T)$ for the following condition to hold:

\vspace{1mm}

\noindent
{\rm{($*$)}} {\it{there exists $d \in O_F \setminus \{0\}$, not a $n$-th power in $O_F$, such that $C_{d \cdot P(T)}$ has no $F$-rational point.}}

\vspace{1mm}

\noindent
Note that a necessary condition for condition ($*$) to hold is that the polynomial $P(T)$ has no root in $F$.

\subsection{State-of-the-art}

To our knowledge, this problem has been essentially studied in the case $F=\Qq$ and $n=2$ (and then the degree $N$ of $P(T)$ is even), {\it{i.e.,}} for {\it{hyperelliptic curves}} over $\Qq$. Even in this case, which seems to be the easiest one, the problem is far from being solved. If $N=2$ ({\it{i.e.,}} if the hyperelliptic curve $C_{P(T)}$ has genus $g=0$), then condition {\rm{($*$)}} holds if and only if $P(T)$ has no root in $\Qq$. But the case $N \geq 4$ ({\it{i.e.,}} $g \geq 1$) remains rather mysterious. While examples of hyperelliptic curves with genus $\geq 1$ that have twists with no $\Qq$-rational point can easily be given (take for example $P(T)=T^{N}+1$ ($N \geq 4$) and $d<0$), it seems difficult to decide whether such a given hyperelliptic curve does or not. However, the answer is positive if $N \geq 4$ and $P(T)$ is irreducible \cite[Corollary 4.2]{Sad14}. Moreover, under some conjectures and if $N \geq 6$, the answer is positive if and only if $P(T)$ has no rational root \cite[Corollary 1 and Conjecture 1]{Gra07}. See also \cite[\S11]{Gra07} for similar conjectural conclusions in the case $n \geq 3$ and $F=\Qq$.

\subsection{Main result}

Here we prove the following general result which seems to be unavailable in the literature\footnote{Although this is not explicitly stated there, Theorem \ref{intro 1} may be obtained from \cite[\S4]{Sad14} in the specific case $n=2$ and $F=\Qq$.}.

\begin{theorem} \label{intro 1}
Denote the roots of $P(T)$ by $t_1,\dots,t_N$. Condition {\rm{($*$)}} holds if the Galois group of $P(T)$ over $F$ has an element fixing no root of $P(T)$, {\it{i.e.,}} if \, $\bigcup_{j=1}^N {\rm{Gal}}(F(t_1,\dots,t_N)/F(t_j)) \not= {\rm{Gal}}(F(t_1,\dots,t_N)/F)$.
\end{theorem}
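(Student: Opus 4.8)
The plan is to construct an explicit twisting datum $d$ by a local argument: pick a prime $\mathfrak{p}$ of $O_F$ behaving in a controlled way, and arrange that the twisted curve $C_{d\cdot P(T)}$ has no $F_{\mathfrak p}$-rational point, hence a fortiori no $F$-rational point. The group-theoretic hypothesis $\bigcup_j \mathrm{Gal}(F(t_1,\dots,t_N)/F(t_j)) \neq \mathrm{Gal}(F(t_1,\dots,t_N)/F)$ says the Galois action on the roots has an element with no fixed point; by Chebotarev this should translate into the existence of infinitely many primes $\mathfrak p$ such that the factorization type of $P(T)$ modulo $\mathfrak p$ has \emph{no linear factor}, i.e. $P(T)$ has no root in the residue field $k_{\mathfrak p}$ (after choosing $\mathfrak p$ among the primes of good reduction, avoiding those dividing $a_N$, the discriminant, $n$, etc.). So the first step is: \emph{use Chebotarev to find a suitable prime $\mathfrak p$ of $O_F$, unramified and of good reduction, with $P(T) \bmod \mathfrak p$ having no root in $k_{\mathfrak p}$.}

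\emph{Second}, I would choose $d$ to be a uniformizer at $\mathfrak p$ — or more precisely an element of $O_F\setminus\{0\}$ with $v_{\mathfrak p}(d) = 1$ (such elements exist, e.g. a suitable element of the ideal $\mathfrak p$ not in $\mathfrak p^2$ times a clearing denominator; and $d$ is automatically not an $n$-th power in $O_F$ since its $\mathfrak p$-valuation is $1$, not divisible by $n \geq 2$). Then I would analyze the equation $Y^n = d\cdot P(T,Z)$ over the completion $F_{\mathfrak p}$. Working on the affine chart and on the chart at infinity of the weighted projective model, a putative $F_{\mathfrak p}$-point gives $x,y,z \in O_{F_{\mathfrak p}}$ (after scaling, not all in $\mathfrak p$) with $y^n = d\cdot P(x,z)$. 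Taking $v_{\mathfrak p}$ of both sides: $n\, v_{\mathfrak p}(y) = 1 + v_{\mathfrak p}(P(x,z))$. The left side is divisible by $n$, so we need $v_{\mathfrak p}(P(x,z)) \equiv -1 \pmod n$, in particular $v_{\mathfrak p}(P(x,z)) \geq 1$, i.e. $P(x,z) \equiv 0 \pmod{\mathfrak p}$. But since $(x,z)$ is primitive mod $\mathfrak p$ and $P$ is homogeneous of degree $N$ with (after the good-reduction choice) $P \bmod \mathfrak p$ having no nontrivial zero with $z \neq 0$ reducing to a root-giving configuration, and $a_N \not\equiv 0$ handling the point at infinity $z \equiv 0$ — this forces a contradiction. \emph{This is the heart of the argument: showing $P(x,z) \not\equiv 0 \pmod{\mathfrak p}$ for every primitive pair, which is exactly the statement that the projective form $P(T,Z)$ has no $k_{\mathfrak p}$-rational zero, equivalent to $P(T)$ having no root mod $\mathfrak p$ together with $a_N \not\equiv 0 \bmod \mathfrak p$.}

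\emph{The main obstacle} I anticipate is not the valuation bookkeeping but making the Chebotarev step genuinely yield a root-free reduction: one must be careful that "good reduction of the curve" and "good factorization behavior of $P$" are compatible, and that for a prime $\mathfrak p$ unramified in $F(t_1,\dots,t_N)/F$ with Frobenius conjugate to the fixed-point-free element $\sigma$, the polynomial $P(T) \bmod \mathfrak p$ indeed has no linear factor. The subtlety is that $P(T)$ need not be separable or squarefree — its roots only have multiplicity $\leq n-1$ — so $F(t_1,\dots,t_N)$ is the splitting field of the radical $P_{\mathrm{red}}(T)$, and I must check the fixed-point-free hypothesis for $P$ is equivalent to the one for $P_{\mathrm{red}}$ (it is: the root sets coincide). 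Then shrinking to $\mathfrak p$ away from the discriminant of $P_{\mathrm{red}}$, from $n$, from $a_N$, and from the (finitely many) primes of bad reduction, Chebotarev gives infinitely many admissible $\mathfrak p$; pick one, build $d$ as above, and conclude. A final routine check is that the weighted-projective point at infinity (where $Z=0$, forcing $Y^n = d\, a_N T^N$ with $T$ a unit) also has no $F_{\mathfrak p}$-point, again because $v_{\mathfrak p}(d\,a_N T^N) = 1 \not\equiv 0 \pmod n$.
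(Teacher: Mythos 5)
Your proposal is correct and follows essentially the same route as the paper: Chebotarev (applied to the radical of $P$) yields primes $\mathfrak{p}$ at which $P$ has no root in the residue field (the paper's ``primes that are not prime divisors of $P(T)$''), and a twist by $d$ with $v_{\mathfrak{p}}(d)>0$ not divisible by $n$ is then excluded by the same valuation bookkeeping, the paper working with the valuation on $F$ directly and allowing any such $d$ for a positive-density set of primes rather than a single uniformizer. The only cosmetic difference is that you argue over the completion with a primitive pair $(t,z)$, while the paper splits into the cases $z=0$, $t'=0$, $v_{\mathfrak{p}}(t')\geq 0$ and $v_{\mathfrak{p}}(t')<0$.
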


\noindent
See Theorem \ref{thm main} for a more precise conclusion on the set of all elements $d$ such that $C_{d \cdot P(T)}$ has no $F$-rational point. For instance, we obtain that there exists a positive density set $\mathcal{S}$ of non-zero prime ideals $\mathcal{P}$ of $O_F$ such that one can take $d$ equal to any element of $O_F$ that has $\mathcal{P}$-adic valuation 1 for some $\mathcal{P}$ in $\mathcal{S}$. Moreover, we recast results from \cite{Leg15} and \cite{Leg16a} to give explicit examples of polynomials such that the assumption of Theorem \ref{intro 1} holds (for example, it holds if $P(T)$ is irreducible over $F$). Propositions \ref{app 1} and \ref{app 2} give our precise results.

\subsection{Applications}

Two applications of Theorem \ref{intro 1} are given.

\subsubsection{A quantitative version of Theorem \ref{intro 1}}

In the case where $P(T)$ is separable, the assumption of Theorem \ref{intro 1} holds if the Galois group of $P(T)$ over $F$ is isomorphic to the symmetric group $S_N$ (when acting on the roots of $P(T)$). By the Hilbert irreducibility theorem, this happens almost always in a sense. More precisely, combining quantitative versions of the Hilbert irreducibility theorem \cite{Coh81b} and Theorem \ref{intro 1} provides the following result, where we take $n=2$ and $F=\Qq$ for simplicity. See Corollary \ref{thm main 2} for our precise result.

In Corollary \ref{intro 2} below, recall that the {\it{height}} of a polynomial $P(T) \in \Zz[T]$ is the maximum of the absolute values of the coefficients of $P(T)$. 

\begin{corollary} \label{intro 2}
Assume that $n=2$ and let $N\geq1$ be an even integer. Then the proportion $f(H)$ of all degree $N$ separable polynomials $P(T) \in \Zz[T]$ with height at most $H$ and such that the associated curve satisfies condition {\rm{($*$)}} tends to 1 as $H$ tends to $\infty$. More precisely, one has
$$f(H)= 1 - O \bigg(\frac{\log(H)}{\sqrt{H}} \bigg), \quad H \rightarrow \infty.$$
\end{corollary}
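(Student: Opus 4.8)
The plan is to derive Corollary \ref{intro 2} from Theorem \ref{intro 1} by showing that, among the degree $N$ separable polynomials of height at most $H$, all but a negligible proportion have Galois group as large as possible. First I would note that if $P(T) \in \Zz[T]$ is separable of degree $N$ and its Galois group over $\Qq$ is the full symmetric group $S_N$ (acting on the $N$ roots $t_1, \dots, t_N$), then the hypothesis of Theorem \ref{intro 1} is satisfied: an $N$-cycle lies in $S_N$ and fixes none of the roots, so $\bigcup_{j=1}^N \Gal(\Qq(t_1, \dots, t_N)/\Qq(t_j)) \neq \Gal(\Qq(t_1, \dots, t_N)/\Qq)$. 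Hence every such $P(T)$ satisfies condition {\rm{($*$)}}, and it remains only to bound the number of degree $N$ separable polynomials of height at most $H$ whose Galois group over $\Qq$ is \emph{not} $S_N$.

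For the denominator, the coefficient vectors $(a_0, \dots, a_N) \in \Zz^{N+1}$ with all $a_i$ bounded by $H$ in absolute value form a box with $(2H+1)^{N+1}$ elements; removing those with $a_N = 0$ (not of degree $N$) and those with vanishing discriminant (not separable) discards only $O(H^N)$ of them, by the standard Schwarz--Zippel estimate for the number of lattice points of a box lying on a hypersurface of degree $O(N)$. Thus the number of degree $N$ separable polynomials of height at most $H$ equals $(2H)^{N+1}(1 + O(1/H))$.

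For the numerator I would invoke the quantitative Hilbert irreducibility theorem. The generic polynomial $a_0 + a_1 T + \cdots + a_N T^N$, with the $a_i$ regarded as independent indeterminates, is classically known to have splitting field with Galois group $S_N$ over $\Qq(a_0, \dots, a_N)$. By Cohen's theorem \cite{Coh81b} on the distribution of Galois groups of specializations, the number of integer vectors $(a_0, \dots, a_N)$ with all $a_i$ bounded by $H$ in absolute value for which the specialized polynomial is inseparable or has Galois group over $\Qq$ different from $S_N$ is $O(H^{N+1/2}\log H)$: the inseparable specializations alone number only $O(H^N)$, and the remaining ones are controlled by Cohen's estimate applied to the family with $N+1$ parameters. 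Combining the two bounds gives
\[
f(H) = 1 - \frac{O(H^{N+1/2}\log H)}{(2H)^{N+1}(1 + O(1/H))} = 1 - O\!\left(\frac{\log H}{\sqrt{H}}\right),
\]
which is the claim; see Corollary \ref{thm main 2} for the precise statement. The main obstacle is essentially bookkeeping: one has to confirm that Cohen's hypotheses --- geometric irreducibility of the varieties attached to the family, and the value $S_N$ of the generic Galois group --- hold in this normalization in which the leading coefficient is also allowed to vary, and to carry the exponent $N + 1/2$ and the logarithmic factor faithfully through his argument; the substantive input from Theorem \ref{intro 1} enters only in the first, elementary, step.
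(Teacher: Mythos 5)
Your proposal is correct and follows essentially the same route as the paper: reduce to the fact that a full $S_N$ Galois group yields a fixed-point-free element (so Theorem \ref{intro 1} applies), bound the exceptional polynomials by $O(H^{N+1/2}\log H)$ via Cohen's quantitative Hilbert irreducibility theorem applied to the generic polynomial with all $N+1$ coefficients as parameters, and divide by the $\sim c\,H^{N+1}$ count of separable degree $N$ polynomials of height at most $H$. The only cosmetic difference is that you spell out the denominator estimate (box count minus the $O(H^N)$ contributions of $a_N=0$ and the discriminant hypersurface), which the paper simply quotes as an asymptotic.
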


\noindent
Corollary \ref{intro 2} may be compared with a result of Bhargava \cite{Bha13} asserting that the proportion of even degree hyperelliptic curves over $\Qq$ with bounded height, given genus and with no $\Qq$-rational point tends to 1 as the genus tends to $\infty$. In our eyes, this result does not provide Corollary \ref{intro 2} as the height of $d \cdot P(T)$ tends to $\infty$ as $|d|$ tends to $\infty$.

\subsubsection{Connections with non-parametric extensions}

Given an indeterminate $T$ and a number field $F$, recall that the {\it{Regular Inverse Galois Problem}} asks whether each finite group $G$ is {\it{a regular Galois group over $F$}}, {\it{i.e.,}} whether $G$ is the Galois group of a Galois extension $E/F(T)$ with $E/F$ {\it{regular}} ({\it{i.e.,}} $E \cap \overline{\Qq} = F$). If there exists such an extension $E/F(T)$ (say for short that $E/F(T)$ is a ``$F$-regular Galois extension with Galois group $G$"), then, by the Hilbert irreducibility theorem, at least one {\it{specialization}} of $E/F(T)$ has Galois group $G$. In particular, $G$ is a Galois group over $F$. Many finite groups have been realized over number fields by this method; see {\it{e.g.}} \cite{MM99}.

In \cite[\S4]{Leg13a}, \cite{Leg15} and \cite{Leg16a}, we consider the following strong variant. We say that a $F$-regular Galois extension $E/F(T)$ with Galois group $G$ is {\it{$G$-parametric}} if every Galois extension of $F$ with Galois group $G$ is a specialization of $E/F(T)$. Although we may expect the anwser to be negative almost always, deciding whether a given $F$-regular Galois extension of $F(T)$ with Galois group $G$ is $G$-parametric is a difficult problem in general (even with $G=\Zz/2\Zz$). In particular, no finite group $G$ such that each $F$-regular Galois extension of $F(T)$ with Galois group $G$ is not $G$-parametric is known.

In \cite{Leg16a}, it is proved that there exists at least one $F$-regular Galois extension of $F(T)$ with Galois group $G$ which is not $G$-parametric as soon as $G$ is a non-trivial regular Galois group over $F$. Moreover, explicit examples are given in \cite[\S4]{Leg13a} and \cite{Leg15} for some specific $F$ and $G$. But methods developed in these papers are rather involved.

Here we combine Theorem \ref{intro 1} and basic tools to give explicit examples of non-parametric extensions with cyclic Galois groups over number fields containing enough roots of unity.

Let $n \geq 2$ be an integer, $F$ a number field, $O_F$ the integral closure of $\Zz$ in $F$ and $P(T) \in O_F[T]$ a non-constant polynomial with degree $N$. Denote the roots of $P(T)$ by $t_1,\dots,t_N$ and set $E=F(T)(\sqrt[n]{P(T)}).$

\begin{corollary} \label{intro 4}
Assume that $n$ divides $N$, $F$ contains the $n$-th roots of unity, $\bigcup_{j=1}^N {\rm{Gal}}(F(t_1,\dots,t_N)/F(t_j)) \not= {\rm{Gal}}(F(t_1,\dots,t_N)/F)$ and $P(T)$ is separable. Then the $F$-regular Galois extension $E/F(T)$ has Galois group $\Zz/n\Zz$ and it is not $\Zz/n\Zz$-parametric.
\end{corollary}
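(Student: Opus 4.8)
The plan is to deduce the corollary from Theorem \ref{intro 1} together with the standard Kummer-theoretic description of the specializations of the extension $E/F(T)$. First I would check that $E/F(T)$ is an $F$-regular Galois extension with Galois group $\Zz/n\Zz$: since $F$ contains the $n$-th roots of unity and $P(T)$ is separable of degree $N$ with $n \mid N$, the polynomial $Y^n - P(T)$ is irreducible over $\overline{F}(T)$ (because $P(T)$ is not a $d$-th power in $\overline{F}(T)$ for any $d>1$ dividing $n$, its roots being simple), so $[E:F(T)] = n$ and $E/F$ is regular; the presence of $\mu_n \subseteq F$ makes $E/F(T)$ Galois with group $\Zz/n\Zz$ via Kummer theory. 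Next, I would recall that for a point $t_0 \in \Pp^1(F)$ which is not a branch point (the branch locus being contained in the roots of $P(T)$ together with $\infty$), the specialization $E_{t_0}/F$ of $E/F(T)$ at $t_0$ is the extension $F(\sqrt[n]{P(t_0)})/F$, which is cyclic of degree equal to the order of the class of $P(t_0)$ in $F^\times / (F^\times)^n$. So the specializations of $E/F(T)$ with Galois group exactly $\Zz/n\Zz$ are, up to the finitely many branch points, precisely the extensions $F(\sqrt[n]{P(t_0)})/F$ with $P(t_0)$ of order $n$ in $F^\times/(F^\times)^n$.

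The heart of the matter is then to produce a $\Zz/n\Zz$-extension of $F$ that is \emph{not} of this form, \emph{i.e.}, not equal to any $F(\sqrt[n]{P(t_0)})/F$ with $t_0 \in F$ off the branch locus. Here is where Theorem \ref{intro 1} enters. By hypothesis the Galois group of $P(T)$ over $F$ has an element fixing no root, so Theorem \ref{intro 1} (in fact the more precise Theorem \ref{thm main} alluded to in the excerpt) provides $d \in O_F \setminus\{0\}$, not an $n$-th power in $O_F$, such that the twisted curve $C_{d\cdot P(T)}: Y^n = d\cdot P(T)$ has no $F$-rational point. I would moreover take $d$ of the special shape guaranteed by Theorem \ref{thm main}, namely with $\mathcal{P}$-adic valuation $1$ for some prime $\mathcal{P}$ in a positive-density set $\mathcal{S}$; this guarantees in particular that the class of $d$ in $F^\times/(F^\times)^n$ has order exactly $n$, so that $F(\sqrt[n]{d})/F$ is a genuine $\Zz/n\Zz$-extension. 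The claim is that $F(\sqrt[n]{d})/F$ is not a specialization of $E/F(T)$. Indeed, if it were, then there would be $t_0 \in F$ off the branch locus with $F(\sqrt[n]{P(t_0)}) = F(\sqrt[n]{d})$; since $F \supseteq \mu_n$, Kummer theory gives that $P(t_0)$ and $d$ generate the same subgroup of $F^\times/(F^\times)^n$, hence $P(t_0) \equiv d^k \pmod{(F^\times)^n}$ for some $k$ coprime to $n$, equivalently $d \cdot P(t_0) \equiv d^{k+1} \pmod{(F^\times)^n}$. One then has to massage this into producing an $F$-point on $C_{d\cdot P(T)}$, contradicting the choice of $d$.

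The step I expect to be the main (though still modest) obstacle is precisely this last massaging: from $P(t_0) \equiv d^k$ modulo $n$-th powers I do not immediately get $d\cdot P(t_0)$ an $n$-th power, only $d^{k+1}$ times one. The fix is to replace $d$ by a suitable power: since Theorem \ref{thm main} actually provides that $C_{d'\cdot P(T)}$ has no $F$-point for \emph{every} $d'$ with $\mathcal{P}$-adic valuation $1$ (for $\mathcal{P} \in \mathcal{S}$), and since $\gcd(k+1,n)$ may be controlled — or, alternatively, since the set of admissible $d$ is closed under multiplication by $n$-th powers and one can choose $k$ by adjusting which element of the density-$1$ family one picks — one arranges that $d^{k+1} \equiv d \pmod{(F^\times)^n}$ up to replacing $d$ by the right power, and then $d \cdot P(t_0)$ is an $n$-th power in $F$, say $d\cdot P(t_0) = y^n$ with $y \in F$; the point $(t_0, y, 1)$ (and, when $t_0 = \infty$ is excluded anyway) lies on $C_{d\cdot P(T)}(F)$, a contradiction. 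Finally I should dispatch the finitely many branch points: each branch point $t_j$ is a root of $P(T)$, and at such a point either $P$ vanishes (so $P(t_j)=0$ is an $n$-th power and the specialization is trivial, not a $\Zz/n\Zz$-extension) — so none of the branch points yields $F(\sqrt[n]{d})/F$ either. This completes the proof that $E/F(T)$ is not $\Zz/n\Zz$-parametric.
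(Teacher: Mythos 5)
Your overall route is the paper's: show that $E/F(T)$ is $F$-regular and cyclic of degree $n$, identify the specializations as the extensions $F(\sqrt[n]{P(t_0)})/F$, and use the twist statement behind Theorem \ref{intro 1} to exhibit a degree-$n$ Kummer extension that is never hit. But the step you yourself flag as the obstacle is where the argument actually breaks, and your proposed fix does not repair it. If $F(\sqrt[n]{d})=F(\sqrt[n]{P(t_0)})$, Kummer theory only gives $P(t_0)\equiv d^{k}\pmod{(F^\times)^n}$ with $(k,n)=1$, where $k$ depends on the (unknown, universally quantified) specialization point $t_0$; you cannot ``choose $k$ by adjusting which element of the density-one family one picks'', and arranging $d^{k+1}\equiv d$, i.e.\ $d^{k}\equiv 1\pmod{(F^\times)^n}$, is impossible: with $(k,n)=1$ this would force $d$ to be an $n$-th power modulo $(F^\times)^n$, contradicting that $F(\sqrt[n]{d})/F$ has degree $n$. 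What the relation really produces is an $F$-rational point $[y:t_0:1]$ on the twist $C_{d^{m}\cdot P(T)}$ with $m=n-k$ coprime to $n$, so you must know that \emph{all} the twists $C_{d^{m}\cdot P(T)}$ with $(m,n)=1$ are empty. The valuation-$1$ form of the theorem you quote does not apply to $d^{m}$ (its $\mathcal{P}$-adic valuation is $m\neq 1$); what is needed is the full condition ($**$) of Theorem \ref{thm main} (emptiness of $C_{d'\cdot P(T)}(F)$ whenever $v_{\mathcal{P}}(d')>0$ and $n\nmid v_{\mathcal{P}}(d')$ for some $\mathcal{P}\in\mathcal{S}$), which is stable under replacing $d$ by $d^{m}$ with $(m,n)=1$. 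This is exactly how the paper argues: its Lemma \ref{lemma 3} reduces ``$F(\sqrt[n]{d})/F$ is a specialization of $E/F(T)$'' to the existence of a rational point on some $C_{d^{m}\cdot P(T)}$ with $(m,n)=1$, and conclusion (2) of Proposition \ref{para} then follows from ($**$).

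A second, smaller omission: since $n\mid N$, the point $\infty$ is \emph{not} a branch point of $E/F(T)$ (paper's Lemma \ref{lemma 1}), so $E_\infty=F(\sqrt[n]{a_N})$ is a legitimate specialization and must also be shown to differ from $F(\sqrt[n]{d})$; you cannot simply exclude $t_0=\infty$ as lying in the branch locus. The paper handles it through the $z=0$ points of the twists, which is possible because the primes in $\mathcal{S}$ are chosen with $v_{\mathcal{P}}(a_N)=0$; your argument needs the analogous step.
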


\noindent
More generally, we prove that, given a non-trivial subgroup $H$ of $\Zz/n\Zz$, there exists a Galois extension of $F$ with Galois group $H$ which is not a specialization of $E/F(T)$. Moreover, we relax the assumption that $P(T)$ is separable (at the cost of making the Galois group of $E/F(T)$ smaller); see Proposition \ref{para}. Furthermore, we give a quantitative version of Corollary \ref{intro 4} in the specific case $n=2$ and $F=\Qq$, in the spirit of Corollary \ref{intro 2}. Proposition \ref{para 3} gives our precise result.

\vspace{3mm}

{\bf{Acknowledgments.}} The author is grateful to the anonymous referees for many helpful comments. This work is partially supported by the Israel Science Foundation (grants No. 40/14 and No. 696/13).

\section{Basics}

Below we survey some standard machinery that will be used in the sequel. \S2.1 is devoted to twists of superelliptic curves while \S2.2 deals with prime divisors of polynomials. In \S2.3, we recall some material on heights of polynomials, taken from \cite[\S2]{Coh81b}. Finally, we review some background on function field extensions in \S2.4.

Let $F$ be a number field and $O_F$ the integral closure of $\Zz$ in $F$.

\subsection{Superelliptic curves and twists}

Let $n$ and $N$ be two positive integers such that $n \geq 2$ and $n$ divides $N$.

Consider the equivalence relation $\sim$ on $\overline{\Qq}^3 \setminus \{(0,0,0)\}$ defined as follows:
$$(y_1,t_1,z_1) \sim (y_2,t_2,z_2)$$
if and only if there exists some $\lambda \in \overline{\Qq} \setminus \{0\}$ such that $$(y_2,t_2,z_2) = (\lambda^{N/n}y_1,\lambda t_1, \lambda z_1).$$ The quotient space $(\overline{\Qq}^3 \setminus \{(0,0,0)\} )/ \sim$ is a weighted projective space that is denoted by $$\mathbb{P}_{N/n,1,1}(\overline{\Qq}).$$ Given $(y,t,z) \in \overline{\Qq}^3 \setminus \{(0,0,0)\}$, the corresponding point in $\mathbb{P}_{N/n,1,1}(\overline{\Qq})$ is denoted by $$[y:t:z].$$

Let $P(T) \in O_F[T]$ be a degree $N$ polynomial whose roots all have multiplicity at most $n-1$. Set
$$P(T)=a_0 + a_1 T + \cdots + a_{N-1} T^{N-1} + a_N T^N$$
and
$$P(T,Z)= a_0 Z^N + a_1 Z^{N-1} T + \cdots + a_{N-1} Z T^{N-1} +a_N T^N.$$
The equation $$Y^n= P(T,Z)$$ in $\mathbb{P}_{N/n,1,1}(\overline{\Qq})$ is {\it{the superelliptic curve associated with $P(T)$}}; we denote it by $C_{P(T)}$. 

Let $d$ be in $O_F \setminus \{0\}$. The superelliptic curve $$Y^n = d \cdot P(T,Z)$$ associated with the polynomial $d \cdot P(T)$ is called {\it{the $d$-th twist of $C_{P(T)}$}}; we denote it by $C_{d \cdot P(T)}$. The set of all $F$-rational points on $C_{d \cdot P(T)}$, {\it{i.e.,}} the set of all elements $[y:t:z] \in \mathbb{P}_{N/n,1,1}(\overline{\Qq})$ such that $(y,t,z) \in F^3 \setminus \{(0,0,0)\}$ and $y^n=d \cdot P(t,z)$, is denoted by $$C_{d \cdot P(T)}(F).$$

\subsection{Prime divisors of polynomials}

Given a non-zero prime ideal $\mathcal{P}$ of $O_F$, denote the associated valuation over $F$ by $v_{\mathcal{P}}$. Let $P(T) \in F[T]$ be a non-constant polynomial with degree $N$.

\begin{definition} \label{divisor}
We say that a non-zero prime ideal $\mathcal{P}$ of $O_F$ is {\it{a prime divisor of $P(T)$}} if there exists some $t_0 \in F$ such that $v_\mathcal{P}(P(t_0))>0$.
\end{definition}

\noindent
Assume that $v_\mathcal{P}(a) \geq 0$ for each coefficient $a$ of $P(T)$. If the reduction of $P(T)$ modulo $\mathcal{P}$ has a root in the residue field $O_F/\mathcal{P}$, then $\mathcal{P}$ is a prime divisor of $P(T)$. The converse holds if the leading coefficient $a_N$ of $P(T)$ satisfies $v_\mathcal{P}(a_N)=0$ (in particular, if $P(T)$ is monic).

\vspace{2mm}

The following lemma will be used on several occasions in the sequel.

\begin{lemma} \label{Tchebotarev}
Denote the roots of $P(T)$ by $t_1,\dots,t_N$ and the splitting field of $P(T)$ over $F$ by $L$. Then the following conditions are equivalent:

\vspace{0.5mm}

\noindent
{\rm{(1)}} $\bigcup_{j=1}^N {\rm{Gal}}(L/F(t_j)) \not= {\rm{Gal}}(L/F)$,

\vspace{0.5mm}

\noindent
{\rm{(2)}} there exists a set $\mathcal{S}$ of non-zero prime ideals of $O_F$ that has positive density and such that no prime ideal in $\mathcal{S}$ is a prime divisor of $P(T)$,

\vspace{0.5mm}

\noindent
{\rm{(3)}} there exist infinitely many non-zero prime ideals of $O_F$ each of which is not a prime divisor of $P(T)$.
\end{lemma}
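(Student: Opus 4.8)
The plan is to prove the cycle of implications $(1)\Rightarrow(2)\Rightarrow(3)\Rightarrow(1)$. Set $G=\mathrm{Gal}(L/F)$ and let $\Omega=\{t_1,\dots,t_N\}$, regarded as a finite $G$-set. Since $L$ is the splitting field of $P(T)$ over each $F(t_j)$, the extension $L/F(t_j)$ is Galois and $\mathrm{Gal}(L/F(t_j))$ is precisely the stabilizer in $G$ of $t_j$; hence $\bigcup_{j=1}^N \mathrm{Gal}(L/F(t_j))$ is the set of $\sigma\in G$ having a fixed point in $\Omega$, and condition (1) is equivalent to the purely group-theoretic statement that \emph{some $\sigma\in G$ has no fixed point in $\Omega$}. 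Note that if $\sigma$ has no fixed point then neither does any conjugate $\tau\sigma\tau^{-1}$, since a fixed point $t$ of $\tau\sigma\tau^{-1}$ would yield the fixed point $\tau^{-1}(t)$ of $\sigma$; so the set of fixed-point-free elements is a union of conjugacy classes.

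Next I would set up a dictionary between prime divisors of $P(T)$ and Frobenius elements. Let $Q(T)\in F[T]$ be the monic radical (squarefree part) of $P(T)$; it has the same roots as $P(T)$, hence the same splitting field $L$, and it is separable. There is a finite set $S_0$ of non-zero prime ideals of $O_F$ outside of which $\mathcal{P}$ is unramified in $L/F$, all coefficients of $P(T)$ and $Q(T)$ are $\mathcal{P}$-integral with $\mathcal{P}$-unit leading coefficients, and $\mathcal{P}\nmid\mathrm{disc}(Q)$. For $\mathcal{P}\notin S_0$, the remark preceding the statement shows that $\mathcal{P}$ is a prime divisor of $P(T)$ if and only if the reduction of $P(T)$ modulo $\mathcal{P}$ has a root in $O_F/\mathcal{P}$; since $P(T)$ and $Q(T)$ have the same distinct irreducible factors, this holds if and only if the reduction of $Q(T)$ has a root in $O_F/\mathcal{P}$; and by the Dedekind--Frobenius factorization theorem the latter holds if and only if $\mathrm{Frob}_{\mathcal{P}}$ (a well-defined conjugacy class in $G$, whose cycle structure on $\Omega$ is independent of the representative) has a fixed point in $\Omega$, i.e. if and only if $\mathrm{Frob}_{\mathcal{P}}\subseteq\bigcup_{j=1}^N \mathrm{Gal}(L/F(t_j))$.

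With this dictionary the implications are short. For $(1)\Rightarrow(2)$: choose a fixed-point-free $\sigma\in G$ and let $C$ be its conjugacy class, which is entirely fixed-point-free; by the Chebotarev density theorem the set $\mathcal{S}$ of primes $\mathcal{P}\notin S_0$ with $\mathrm{Frob}_{\mathcal{P}}=C$ has density $|C|/|G|>0$, and no prime in $\mathcal{S}$ is a prime divisor of $P(T)$. The implication $(2)\Rightarrow(3)$ is immediate, a positive-density set of primes being infinite. For $(3)\Rightarrow(1)$ I argue by contraposition: if (1) fails, then every element of $G$, hence every $\mathrm{Frob}_{\mathcal{P}}$, has a fixed point in $\Omega$, so by the dictionary every $\mathcal{P}\notin S_0$ is a prime divisor of $P(T)$; thus the primes that are not prime divisors all lie in the finite set $S_0$, contradicting (3).

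I expect the only delicate point to be the bookkeeping in the dictionary when $P(T)$ is neither monic nor separable: one must check, uniformly for all but finitely many $\mathcal{P}$, that replacing $P(T)$ by its monic radical $Q(T)$ does not change which primes are prime divisors and that the Dedekind--Frobenius theorem applies. This is routine once one localizes away from the relevant denominators, from the leading coefficient and discriminant of $Q(T)$, and from the primes ramifying in $L$. Everything else is a direct application of the Chebotarev density theorem together with the remark already recorded before the statement.
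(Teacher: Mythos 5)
Your proposal is correct and follows essentially the same route as the paper: reduce to the separable (radical) case, use the Dedekind--Frobenius correspondence between roots of $P(T)$ modulo $\mathcal{P}$ and fixed points of $\mathrm{Frob}_{\mathcal{P}}$, apply Chebotarev to a fixed-point-free conjugacy class for (1)$\Rightarrow$(2), and argue (3)$\Rightarrow$(1) by contraposition. The only difference is that you spell out explicitly the bookkeeping (monic radical, leading coefficient, discriminant, ramification) that the paper leaves implicit in its ``except maybe for finitely many exceptions'' remarks.
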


\begin{proof}
Up to replacing $P(T)$ by the product of its distinct irreducible factors (over $F$), we may assume that $P(T)$ is separable.

First, assume that (1) holds. Pick an element $\sigma \in {\rm{Gal}}(L/F) \setminus \bigcup_{j=1}^N {\rm{Gal}}(L/F(t_j))$. By the Tchebotarev density theorem, there exists a positive density set $\mathcal{S}$ of non-zero prime ideals $\mathcal{P}$ of $O_F$  such that the Frobenius associated with $\mathcal{P}$ in $L/F$ is conjugate in ${\rm{Gal}}(L/F)$ to $\sigma$. As $\sigma$ fixes no root of $P(T)$, such a prime $\mathcal{P}$ (except maybe for finitely many exceptions) is not a prime divisor of $P(T)$, as needed for (2).

As the implication (2) $\Rightarrow$ (3) is obvious, it remains to prove the implication (3) $\Rightarrow$ (1). Assume that (1) fails. Let $\mathcal{P}$ be a non-zero prime ideal of $O_F$ that does not ramify in $L/F$ and $\sigma$ the associated Frobenius in this extension. As (1) has been assumed to fail, $\sigma$ fixes a root of $P(T)$. In particular, $\mathcal{P}$ is a prime divisor of $P(T)$ (except maybe for finitely many exceptions). Hence (3) does not hold either.
\end{proof}

\subsection{Heights of polynomials}

Denote the degree of the extension $F/\Qq$ by $\delta$. Let $\omega_1, \dots, \omega_\delta$ be a $\Zz$-basis of $O_F$. Define 

\noindent
- the {\it{height of $a \in O_F$}} as ${\rm{max}} (|a_1|^\delta, \dots, |a_\delta|^\delta)$
where $a$ is uniquely written as $$a=a_1 \cdot \omega_1 + \cdots + a_\delta \cdot \omega_\delta,$$

\noindent
- the {\it{height of $P(T) \in O_F[T]$}} as the maximum of the heights of the coefficients of $P(T)$.

\noindent
Note that these definitions depend on the choice of a $\Zz$-basis of $O_F$.
\subsection{Function field extensions}

Given an indeterminate $T$, let $E/F(T)$ be a finite Galois extension with Galois group $G$ and such that $E/F$ is {\it{regular}} ({\it{i.e.,}} $E \cap \overline{\Qq}=F$). For short, we say that $E/F(T)$ is a $F$-regular Galois extension with Galois group $G$.

Recall that a point $t_0 \in \mathbb{P}^1(\overline{\Qq})$ is {\it{a branch point of $E/F(T)$}} if the prime ideal $(T-t_0)  \overline{\Qq}[T-t_0]$ \footnote{Replace $T-t_0$ by $1/T$ if $t_0 = \infty$.} ramifies in the integral closure of $\overline{\Qq}[T-t_0]$ in the {\it{compositum}} of $E$ and $\overline{\Qq}(T)$ (in a fixed algebraic closure of $F(T)$). The extension $E/F(T)$ has only finitely many branch points.

Given a point $t_0 \in \mathbb{P}^1(F)$, not a branch point, the residue extension of $E/F(T)$ at a prime ideal $\mathcal{P}$ lying over $(T-t_0) {F}[T-t_0]$ is denoted by ${E}_{t_0}/F$ and called {\it{the specialization of ${E}/F(T)$ at $t_0$}}. This does not depend on the choice of the prime $\mathcal{P}$  lying over $(T-t_0) {F}[T-t_0]$ as ${E}/F(T)$ is Galois. The extension $E_{t_0}/F$ is Galois with Galois group a subgroup of $G$, namely the decomposition group of ${E}/F(T)$ at $\mathcal{P}$.

Given a subgroup $H$ of $G$, the extension $E/F(T)$ is said to be {\it{$H$-parametric}} if each Galois extension of $F$ with Galois group $H$ is a specialization of $E/F(T)$.

\section{Proof of Theorem \ref{intro 1}}

This section is organized as follows. In \S3.1, we state Theorem \ref{thm main} which is a more precise version of Theorem \ref{intro 1}. Theorem \ref{thm main} is proved in \S3.2. Next, we discuss the converse of Theorem \ref{thm main} in \S3.3. Finally, \S3.4 is devoted to Propositions \ref{app 1} and \ref{app 2} which give explicit examples of polynomials satisfying the assumption of Theorem \ref{thm main}.

\subsection{Statement of Theorem \ref{thm main}}

Let $n \geq 2$ be an integer, $F$ a number field, $O_F$ the integral closure of $\Zz$ in $F$ and $N$ a positive multiple of $n$. Let $P(T) = a_0 + a_1 T + \cdots +a_{N-1} T^{N-1} + a_N T^N$
be a polynomial with degree $N$, coefficients in $O_F$ and whose roots all have multiplicity at most $n-1$. Denote the roots of $P(T)$ by $t_1,\dots,t_N$ and the splitting field of $P(T)$ over $F$ by $L$. Given a non-zero prime ideal $\mathcal{P}$ of $O_F$, denote the associated valuation over $F$ by $v_\mathcal{P}$.

\begin{theorem} \label{thm main}
Assume that the following condition holds:

\vspace{0.5mm}

\noindent
{\rm{(H)}} $\bigcup_{j=1}^N {\rm{Gal}}(L/F(t_j)) \not= {\rm{Gal}}(L/F)$. 

\vspace{0.5mm}

\noindent
Then there exists a set $\mathcal{S}$ of non-zero prime ideals of $O_F$ that has positive density and that satisfies the following property.

\vspace{0.5mm}

\noindent
{\rm{($**$)}} One has $C_{d \cdot P(T)}(F) = \emptyset$ for each element $d \in O_F \setminus \{0\}$ for which there exists a prime $\mathcal{P}$ in $\mathcal{S}$ such that $v_{\mathcal{P}}(d) > 0$ and $n {\not \vert} v_{\mathcal{P}}(d)$.
\end{theorem}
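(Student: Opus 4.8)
The plan is to take for $\mathcal{S}$ essentially the set of non-zero prime ideals of $O_F$ that are \emph{not} prime divisors of $P(T)$, and to show that at such a prime the \emph{local} condition at $\mathcal{P}$ already obstructs the existence of an $F$-rational point as soon as $n \nmid v_\mathcal{P}(d)$. The construction of $\mathcal{S}$ is immediate from Lemma \ref{Tchebotarev}: condition (H) is exactly condition (1) of that lemma, so there is a positive density set of non-zero primes of $O_F$, none of which is a prime divisor of $P(T)$; after discarding the finitely many primes dividing the leading coefficient $a_N$ (which is non-zero), I obtain a positive density set $\mathcal{S}$ of primes $\mathcal{P}$ with the two properties that $\mathcal{P}$ is not a prime divisor of $P(T)$ and $v_\mathcal{P}(a_N)=0$ (note that $v_\mathcal{P}(a_i)\geq 0$ for all $i$ holds automatically, since $P(T)\in O_F[T]$).

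Now fix $d \in O_F\setminus\{0\}$ and $\mathcal{P}\in\mathcal{S}$ with $v_\mathcal{P}(d)>0$ and $n\nmid v_\mathcal{P}(d)$, and argue by contradiction: suppose $[y:t:z]\in C_{d\cdot P(T)}(F)$. Since $P(0,0)=0$ one cannot have $t=z=0$, so $\min(v_\mathcal{P}(t),v_\mathcal{P}(z))$ is finite, and I may rescale the representative $(y,t,z)$ by a suitable $\lambda\in F^\times$ — this is legitimate because $(\lambda^{N/n}y,\lambda t,\lambda z)$ is again a representative of the same point with $(\lambda^{N/n}y)^n = \lambda^N y^n = d\cdot\lambda^N P(t,z) = d\cdot P(\lambda t,\lambda z)$, using that $P(T,Z)$ is homogeneous of degree $N$ — so as to arrange $t,z\in O_{F,\mathcal{P}}$ with $\min(v_\mathcal{P}(t),v_\mathcal{P}(z))=0$. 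Then $y^n = d\cdot P(t,z)\in O_{F,\mathcal{P}}$, whence $v_\mathcal{P}(y)\geq 0$ as well.

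The heart of the matter is to check that $v_\mathcal{P}(P(t,z))=0$. I would split into two cases. If $v_\mathcal{P}(z)=0$, then $t/z\in O_{F,\mathcal{P}}$ and $P(t,z)=z^N P(t/z)$; were $v_\mathcal{P}(P(t,z))>0$, the reduction of $P(T)$ modulo $\mathcal{P}$ would have the root $\overline{t/z}$ in the residue field $O_F/\mathcal{P}$, so $\mathcal{P}$ would be a prime divisor of $P(T)$ (by the elementary description recalled after Definition \ref{divisor}), contradicting $\mathcal{P}\in\mathcal{S}$. If instead $v_\mathcal{P}(z)>0$, then $v_\mathcal{P}(t)=0$, and among the terms $a_i z^{N-i}t^i$ of $P(t,z)$ only the one with $i=N$ has valuation $0$ (here $v_\mathcal{P}(a_N)=0$ is used) while every other term has valuation $\geq v_\mathcal{P}(z)>0$, so again $v_\mathcal{P}(P(t,z))=0$. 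Hence $v_\mathcal{P}(d)=v_\mathcal{P}(d\cdot P(t,z))=v_\mathcal{P}(y^n)=n\,v_\mathcal{P}(y)$ is divisible by $n$, contradicting $n\nmid v_\mathcal{P}(d)$. This establishes ($**$).

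I do not expect a genuine obstacle here: the argument is a direct combination of Lemma \ref{Tchebotarev} with a valuation-theoretic local solubility computation. The only points requiring a little care are the rescaling step inside the weighted projective space $\mathbb{P}_{N/n,1,1}$ — one must check that one can normalise the representative so that $t,z$ are $\mathcal{P}$-integral and not both in $\mathcal{P}$, and that this keeps $y$ $\mathcal{P}$-integral — and the bookkeeping in the two valuation cases; these are routine.
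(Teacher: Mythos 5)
Your proof is correct and follows essentially the same route as the paper: the set $\mathcal{S}$ is produced from condition (H) via Lemma \ref{Tchebotarev} exactly as in the paper (discarding the finitely many primes dividing $a_N$), and the obstruction is the same $\mathcal{P}$-adic valuation count showing $n \mid v_\mathcal{P}(d)$ for any putative rational point. The only cosmetic difference is that you rescale to a primitive homogeneous representative with $\min(v_\mathcal{P}(t),v_\mathcal{P}(z))=0$ rather than dehomogenising via $t'=t/z$, $y'=y/z^{N/n}$ as the paper does, which merges the paper's separate cases ($z=0$, $t'=0$, $v_\mathcal{P}(t')<0$) into two and removes the need to also assume $v_\mathcal{P}(a_0)=0$.
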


\subsection{Proof of Theorem \ref{thm main}}

By condition (H) and the implication (1) $\Rightarrow$ (2) of Lemma \ref{Tchebotarev}, there exists a positive density set $\mathcal{S}$ of non-zero prime ideals of $O_F$ each of which is not a prime divisor of $P(T)$. Moreover, the polynomial $P(T)$ has no root in $F$. In particular, one has $a_0 \not=0$. Up to dropping finitely many primes, we may assume that each prime $\mathcal{P}$ in $\mathcal{S}$ satisfies $v_\mathcal{P}(a_0) = 0$ and $v_\mathcal{P}(a_N)=0$.

Let $d$ be in $\mathcal{O}_F \setminus \{0\}$. Assume that there exists a prime ideal $\mathcal{P} \in \mathcal{S}$ such that $v_\mathcal{P}(d) >0$ and $n {\not \vert} v_\mathcal{P}(d)$. Fix such a prime ideal $\mathcal{P}$.

Suppose $C_{d \cdot P(T)}$ has a $F$-rational point $[y:t:z]$. If $z=0$, one has
\begin{equation} \label{eq-1}
y^n = d \cdot a_N t^N.
\end{equation}
In particular, one has $y \not=0$ and $t \not=0$. By the definition of $\mathcal{S}$ and \eqref{eq-1}, one has $n \cdot v_\mathcal{P}(y) = v_\mathcal{P}(d) + N \cdot v_\mathcal{P}(t)$. As $n$ divides $N$, we get that $n$ divides $v_\mathcal{P}(d)$, which cannot happen. Hence $z \not=0$. As already said, $P(T)$ has no root in $F$. Hence $y \not=0$. Set $$y'=y / z^{N/n} \, \, {\rm{and}} \, \, t'=t/z.$$ Note that $y'$ and $t'$ are uniquely determined, $y' \not=0$ and $t'$ is not a root of $P(T)$. Moreover, one has 
\begin{equation} \label{eq 0}
y'^n = d \cdot P(t').
\end{equation}
 If $v_\mathcal{P}(P(t')) = 0$, \eqref{eq 0} provides $n \cdot v_\mathcal{P}(y') = v_\mathcal{P}(d)$. Then $n \vert v_\mathcal{P}(d)$, which cannot happen. Hence 
\begin{equation} \label{eq -1-}
v_\mathcal{P}(P(t')) \not=0.
\end{equation}
If $t'=0$, \eqref{eq 0} gives $y'^n = d \cdot a_0$. By the definition of $\mathcal{S}$, we get $n \vert v_\mathcal{P}(d)$, which cannot happen. Hence $t' \not=0$. If $v_\mathcal{P}(t') \geq 0$, we get $v_\mathcal{P}(P(t')) \geq 0$. By \eqref{eq -1-}, one then has $v_\mathcal{P}(P(t')) >0$. Hence $\mathcal{P}$ is a prime divisor of $P(T)$, which contradicts the definition of $\mathcal{S}$. Then $v_\mathcal{P}(t')<0$. By the definition of $\mathcal{S}$, we get 
\begin{equation} \label{eq 2}
v_\mathcal{P}(P(t')) = v_\mathcal{P}(t'^N) = N \cdot v_\mathcal{P}(t').
\end{equation}
Combining \eqref{eq 0} and \eqref{eq 2} then provides
$n \cdot v_\mathcal{P}(y') = v_\mathcal{P}(d) +N \cdot v_\mathcal{P}(t').$
As $n$ divides $N$, we get that $n$ divides $v_\mathcal{P}(d)$, which cannot happen. Hence $C_{d \cdot P(T)}(F) = \emptyset$, thus ending the proof of Theorem \ref{thm main}.

\subsection{On the converse of Theorem \ref{thm main}}

Let $\mathcal{S}^*$ be the subset of the set of all non-zero prime ideals of $O_F$ that is maximal with respect to condition ($**$) of Theorem \ref{thm main}. The argument given in \S3.2 shows that $\mathcal{S}^*$ contains all but finitely many non-zero prime ideals of $O_F$ that are not prime divisors of $P(T)$ (either under condition (H) or not).

Below we provide some converse to this last conclusion.

\begin{proposition} \label{converse}
Assume that $P(T)$ is separable. Then $\mathcal{S}^*$ and the set of all non-zero prime ideals of $O_F$ that are not prime divisors of $P(T)$ differ only in finitely many prime ideals.
\end{proposition}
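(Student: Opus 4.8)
The plan is to establish the inclusion complementary to the one already recorded at the start of \S3.3: that all but finitely many prime divisors of $P(T)$ lie \emph{outside} $\mathcal{S}^*$. Since, as noted there, $\mathcal{S}^*$ contains all but finitely many non-zero prime ideals that are not prime divisors of $P(T)$, the two sets will then agree up to finitely many primes, which is the assertion. So I would fix a prime divisor $\mathcal{P}$ of $P(T)$ and aim to produce some $d\in O_F\setminus\{0\}$ with $v_\mathcal{P}(d)>0$, $n\nmid v_\mathcal{P}(d)$ and $C_{d\cdot P(T)}(F)\neq\emptyset$, which forces $\mathcal{P}\notin\mathcal{S}^*$; it suffices to do this for all $\mathcal{P}$ outside an explicit finite set.

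The first step is to find $t_0\in O_F$ with $v_\mathcal{P}(P(t_0))=1$. Discarding the finitely many primes dividing $a_N$ or the (nonzero, since $P(T)$ is separable) discriminant of $P(T)$, the reduction $\overline{P}$ of $P(T)$ modulo $\mathcal{P}$ is separable of degree $N$; by the remark after Definition \ref{divisor}, the hypothesis that $\mathcal{P}$ is a prime divisor of $P(T)$ means $\overline{P}$ has a root in $O_F/\mathcal{P}$, necessarily simple, so any lift $t_0'\in O_F$ satisfies $v_\mathcal{P}(P(t_0'))\geq 1$ and $v_\mathcal{P}(P'(t_0'))=0$. If the first inequality is strict I would replace $t_0'$ by $t_0=t_0'+\pi$ for some $\pi\in\mathcal{P}\setminus\mathcal{P}^2$: since all coefficients of the Taylor expansion of $P$ at $t_0'$ are integral, the constant term $P(t_0')$ has valuation $\geq 2$, the linear term $P'(t_0')\pi$ has valuation exactly $1$, and the higher terms have valuation $\geq 2$, so $v_\mathcal{P}(P(t_0))=1$ as wanted.

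The second step is to build $d$ from $t_0$. I would let $J$ be the (integral, finite) ideal $\prod_{\mathcal{Q}}\mathcal{Q}^{\lceil v_\mathcal{Q}(P(t_0))/n\rceil}$, pick any nonzero $c\in J$, and set $d=c^n/P(t_0)$. Then $v_\mathcal{Q}(d)=n\,v_\mathcal{Q}(c)-v_\mathcal{Q}(P(t_0))\geq 0$ for every non-zero prime $\mathcal{Q}$, so $d\in O_F\setminus\{0\}$; at $\mathcal{P}$ one has $v_\mathcal{P}(c)\geq\lceil 1/n\rceil=1$, whence $v_\mathcal{P}(d)=n\,v_\mathcal{P}(c)-1\geq n-1\geq 1>0$ and $v_\mathcal{P}(d)\equiv-1\not\equiv 0\pmod n$; and since $c^n=d\cdot P(t_0)=d\cdot P(t_0,1)$, the triple $(c,t_0,1)$ gives a point $[c:t_0:1]\in C_{d\cdot P(T)}(F)$. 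This yields $\mathcal{P}\notin\mathcal{S}^*$ and completes the proof.

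I do not expect a serious obstacle, but the one place where the separability hypothesis is genuinely used is the production in the first step of a value $P(t_0)$ whose $\mathcal{P}$-adic valuation is not divisible by $n$ (indeed equal to $1$): if $P(T)$ had a repeated root, the values of $P(T)$ near that root would have $\mathcal{P}$-adic valuations forced to be multiples of the multiplicity, and the strategy of manufacturing a twist with an $F$-rational point above a prime divisor would break down — precisely the obstruction already visible in the example $P(T)=T^n$.
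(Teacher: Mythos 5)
Your proposal is correct and follows essentially the same route as the paper: exclude the finitely many primes dividing $a_N$ or the discriminant, use the simple root modulo $\mathcal{P}$ and a Taylor-expansion adjustment to find $t_0$ with $v_\mathcal{P}(P(t_0))=1$, and then produce a twist with the rational point $[\,\cdot\,:t_0:1]$ and $v_\mathcal{P}(d)\equiv -1 \pmod n$. The only (harmless) difference is your choice of $d=c^n/P(t_0)$, of which the paper's simpler choice $d=P(t_0)^{n-1}$ with the point $[P(t_0):t_0:1]$ is just the case $c=P(t_0)$.
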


\begin{proof}
Let $\mathcal{P}$ be a prime divisor of $P(T)$. Assume that $v_\mathcal{P}(a_N)=0$ and $\mathcal{P}$ does not contain the discriminant $\Delta \not=0$ of $P(T)$. Then there exists $t \in O_F$ such that $P(t) \not=0$ and $v_\mathcal{P}(P(t))$ is a positive integer. We claim that we may assume $v_\mathcal{P}(P(t))=1$. Indeed, assume that
\begin{equation} \label{3.2}
v_\mathcal{P}(P(t)) \geq 2.
\end{equation}
Let $x_\mathcal{P} \in O_F$ be a generator of the maximal ideal $\mathcal{P} ({O_F})_\mathcal{P}$ of the localization $({O_F})_\mathcal{P}$ of $O_F$ at $\mathcal{P}$. By the Taylor formula, one has
\begin{equation} \label{3.3}
P(t+x_\mathcal{P}) = P(t) + x_\mathcal{P} P'(t) + x_\mathcal{P}^2 R_\mathcal{P}
\end{equation}
for some $R_\mathcal{P} \in ({O_F})_\mathcal{P}$. As $\mathcal{P}$ does not contain $\Delta$, one has 
\begin{equation} \label{3.4}
v_\mathcal{P}(P'(t))=0.
\end{equation} 
Combining \eqref{3.2}, \eqref{3.3} and \eqref{3.4} then provides $v_\mathcal{P}(P(t + x_\mathcal{P})) =1$, thus proving our claim (up to replacing $t$ by $t+x_\mathcal{P}$). From now on, we assume that $v_\mathcal{P}(P(t))=1$. Now, set $d=P(t)^{n-1} \in O_F \setminus \{0\}$. Then $v_\mathcal{P}(d) >0$, $n {\not \vert} v_\mathcal{P}(d)$ and the $F$-rational point $[P(t):t:1]$ lies on the $d$-th twist $C_{d \cdot P(T)}$ of $C_{P(T)}$. Hence $\mathcal{P}$ is not in $\mathcal{S}^*$, as needed.
\end{proof}

We then get the following further conclusions.

\vspace{2mm}

\noindent
(1) Assume that condition (H) holds. Then, by the Tchebotarev density theorem and \S3.2, the set $\mathcal{S}^*$ contains a set of non-zero prime ideals of $O_F$ that has positive density $\delta$ equal to 
$$1 - \frac{|\bigcup_{j=1}^N {\rm{Gal}}(L/F(t_j))|}{|{\rm{Gal}}(L/F)|}.$$
Moreover, if $P(T)$ is separable, then $\mathcal{S}^*$ has density $\delta$ (Proposition \ref{converse}).

\vspace{2mm}

\noindent
(2) Assume that $P(T)$ is separable and condition (H) fails. Then, by Proposition \ref{converse} and the implication (3) $\Rightarrow$ (1) of Lemma \ref{Tchebotarev}, the set $\mathcal{S}^*$ is finite. We then get the following converse to Theorem \ref{thm main}.

\begin{corollary} \label{converse 2}
Assume that there exists an infinite set $\mathcal{S}$ of non-zero prime ideals of $O_F$ that satisfies condition {\rm{($**$)}} of Theorem \ref{thm main}. Then either condition {\rm{(H)}} holds or the polynomial $P(T)$ is not separable.
\end{corollary}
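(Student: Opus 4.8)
The plan is to prove the contrapositive, which is essentially the content of item (2) recorded just above the statement. So I would assume that condition (H) fails and that $P(T)$ is separable, and show that then every set $\mathcal{S}$ of non-zero prime ideals of $O_F$ satisfying condition ($**$) of Theorem \ref{thm main} is finite; this contradicts the hypothesis that $\mathcal{S}$ is infinite, and the corollary follows.

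First I would note that $\mathcal{S}^*$ is well-defined and absorbs every set satisfying ($**$). Indeed, condition ($**$) is phrased primewise: for a given $d$, it only requires the existence of \emph{one} prime $\mathcal{P}$ in the set with $v_\mathcal{P}(d)>0$ and $n \nmid v_\mathcal{P}(d)$. Hence if $\mathcal{S} = \bigcup_i \mathcal{S}_i$ with each $\mathcal{S}_i$ satisfying ($**$), then $\mathcal{S}$ satisfies ($**$) as well, so the union of all sets satisfying ($**$) is the maximal such set $\mathcal{S}^*$, and every $\mathcal{S}$ satisfying ($**$) is contained in $\mathcal{S}^*$.

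Next I would bound $\mathcal{S}^*$ using the two already-established tools. Let $\mathcal{D}$ denote the set of non-zero prime ideals of $O_F$ that are \emph{not} prime divisors of $P(T)$. Since $P(T)$ is separable, Proposition \ref{converse} gives that $\mathcal{S}^*$ and $\mathcal{D}$ differ only in finitely many primes. On the other hand, since (H) fails, the contrapositive of the implication (3) $\Rightarrow$ (1) of Lemma \ref{Tchebotarev} shows that $\mathcal{D}$ is finite. Combining, $\mathcal{S}^*$ is finite, hence so is any $\mathcal{S} \subseteq \mathcal{S}^*$ satisfying ($**$), which is the desired contradiction.

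I do not expect a genuine obstacle here: the statement is a formal consequence of Proposition \ref{converse} and Lemma \ref{Tchebotarev}, both already proved. The only point deserving a word of care is the reduction to $\mathcal{S}^*$, namely checking that ($**$) is stable under arbitrary unions so that $\mathcal{S}^*$ both makes sense and contains every $\mathcal{S}$ satisfying ($**$); once this is granted, the rest is the two-line deduction above.
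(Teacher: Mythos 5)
Your proposal is correct and follows essentially the same route as the paper, which deduces the corollary from Proposition \ref{converse} together with the implication (3) $\Rightarrow$ (1) of Lemma \ref{Tchebotarev} (item (2) of \S3.3): when (H) fails and $P(T)$ is separable, the set $\mathcal{S}^*$ is finite, so no infinite set can satisfy ($**$). Your extra check that ($**$) is a primewise condition, so that $\mathcal{S}^*$ is well-defined and contains every set satisfying ($**$), is a point the paper leaves implicit but is verified exactly as you say.
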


\subsection{Explicit examples}

In Propositions \ref{app 1} and \ref{app 2} below, we provide explicit examples of polynomials $P(T)$ such that condition (H) holds. For simplicity, we only give examples of such polynomials that are separable. 

Let $F$ be a number field, $O_F$ the integral closure of $\Zz$ in $F$ and $P(T) \in O_F[T]$ a separable polynomial with degree $N$.

\subsubsection{Statement of Proposition \ref{app 1}}
Set
$$P(T)=P_1(T) \cdots P_r(T) \cdot P_{r+1}(T) \cdots P_s(T)$$ 
where $P_1(T), \dots, P_s(T)$ are distinct polynomials that are irreducible over $F$. For each $j \in \{1,\dots,s\}$, let $F_j$ be the field generated over $F$ by one root of $P_j(T)$ and $L_j$ the splitting field of $P_j(T)$ over $F$.

\begin{proposition} \label{app 1}

Assume that the following three conditions hold:

\vspace{0.5mm}

\noindent
{\rm{(1)}} $P(T)$ has no root in $F$,

\vspace{0.5mm}

\noindent
{\rm{(2)}} given $j \in \{r+1,\dots,s\}$, one has $F_j=F_{i_j}$ for some $i_j \in \{1,\dots,r\}$,

\vspace{0.5mm}

\noindent
{\rm{(3)}} the fields $L_1,\dots,L_r$ are linearly disjoint over $F$.

\vspace{0.5mm}

\noindent
Then condition {\rm{(H)}} holds.
\end{proposition}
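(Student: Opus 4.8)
The plan is to produce an explicit element $\sigma \in {\rm{Gal}}(L/F)$ that fixes no root of $P(T)$; such a $\sigma$ lies outside $\bigcup_{j=1}^N {\rm{Gal}}(L/F(t_j))$, which is exactly condition (H).

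First I would record the structural consequences of the hypotheses. Since $P(T)$ has no root in $F$ (condition (1)) and each $P_i(T)$ is irreducible over $F$, one has $\deg P_i \geq 2$ for all $i \in \{1,\dots,s\}$. For $j \in \{r+1,\dots,s\}$, condition (2) gives $F_j = F_{i_j}$ with $i_j \in \{1,\dots,r\}$; passing to Galois closures over $F$ yields $L_j = L_{i_j}$, so the splitting field $L$ of $P(T)$ equals the compositum $L_1 \cdots L_r$. By condition (3), the restriction map ${\rm{Gal}}(L/F) \to \prod_{i=1}^r {\rm{Gal}}(L_i/F)$ is an isomorphism. Next, for each $i \in \{1,\dots,r\}$ the group ${\rm{Gal}}(L_i/F)$ acts transitively on the (at least two) roots of $P_i(T)$, so by Jordan's classical theorem that a finite transitive permutation group of degree $\geq 2$ has a fixed-point-free element (by Burnside's lemma the average number of fixed points over the group equals the number of orbits, namely $1$, whereas the identity fixes at least two points), there is $\sigma_i \in {\rm{Gal}}(L_i/F)$ fixing no root of $P_i(T)$. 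I then let $\sigma \in {\rm{Gal}}(L/F)$ be the unique element with $\sigma|_{L_i} = \sigma_i$ for every $i \in \{1,\dots,r\}$.

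It remains to verify that $\sigma$ fixes no root $t$ of $P(T)$, say $t$ a root of $P_j(T)$. If $j \leq r$, then $\sigma(t) = \sigma_j(t) \neq t$ by the choice of $\sigma_j$. If $j > r$, fix a root $\theta_j$ of $P_j(T)$ and a root $\theta_{i_j}$ of $P_{i_j}(T)$ with $F(\theta_j) = F_j = F_{i_j} = F(\theta_{i_j})$, and pick $\rho \in {\rm{Gal}}(L/F)$ with $\rho(\theta_j) = t$ (possible since $P_j(T)$ is irreducible over $F$ and $L/F$ is Galois, and $L_j = L_{i_j} \subseteq L$). Set $t' = \rho(\theta_{i_j})$; then $t'$ is a root of $P_{i_j}(T)$, and since $\rho$ fixes $F$ one has $F(t') = \rho(F_{i_j}) = \rho(F_j) = F(\rho(\theta_j)) = F(t)$, so $t' = h(t)$ for some $h \in F[T]$. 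If $\sigma(t) = t$, then $\sigma(t') = h(\sigma(t)) = h(t) = t'$, whence $\sigma_{i_j} = \sigma|_{L_{i_j}}$ fixes the root $t'$ of $P_{i_j}(T)$, contradicting the choice of $\sigma_{i_j}$. Hence $\sigma(t) \neq t$ in all cases, and condition (H) follows.

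I expect the only delicate point to be the case $j > r$ in the last paragraph: condition (2) is phrased in terms of a single root of $P_j(T)$ and of $P_{i_j}(T)$, so one must transport the field equality $F_j = F_{i_j}$ to the particular root $t$ under consideration by means of a suitable element of ${\rm{Gal}}(L/F)$, using $L_j = L_{i_j} \subseteq L$. The remaining ingredients — Jordan's theorem, and the fact that linear disjointness of the $L_i$ over $F$ gives the product decomposition of ${\rm{Gal}}(L/F)$ — are standard.
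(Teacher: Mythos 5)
Your proof is correct. The first half coincides with the paper's: both use condition (1) to get $\deg P_i\geq 2$, Jordan's fixed-point-free element in each transitive group ${\rm Gal}(L_i/F)$ acting on the roots of $P_i(T)$ ($i\leq r$), and condition (3) to glue these into one element $\sigma$ of ${\rm Gal}(L/F)\cong\prod_{i=1}^r{\rm Gal}(L_i/F)$. Where you diverge is in how the redundant factors $P_{r+1},\dots,P_s$ are handled. The paper does not check that $\sigma$ is fixed-point-free on the roots of $P(T)$ itself; instead it applies the implication (1) $\Rightarrow$ (3) of Lemma \ref{Tchebotarev} to $P_1(T)\cdots P_r(T)$, observes via condition (2) that $P_1(T)\cdots P_r(T)$ and $P(T)$ have the same prime divisors up to finitely many, and then returns to condition (H) for $P(T)$ by the implication (3) $\Rightarrow$ (1) of that lemma -- so the transfer goes through prime divisors and Tchebotarev. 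You instead transfer directly at the level of Galois theory: for a root $t$ of $P_j(T)$ with $j>r$, you conjugate the field equality $F_j=F_{i_j}$ by a suitable $\rho\in{\rm Gal}(L/F)$ to produce a root $t'$ of $P_{i_j}(T)$ with $F(t')=F(t)$, so that $\sigma(t)=t$ would force $\sigma$ to fix $t'$, contradicting the choice of $\sigma_{i_j}$; this step is carried out correctly (note that $L_j=L_{i_j}$, so $\rho$ exists and $t'\in L_{i_j}$). Your argument is the more direct and elementary one, producing an explicit element witnessing (H) and avoiding any appeal to the Tchebotarev density theorem at this stage; the paper's detour through Lemma \ref{Tchebotarev} buys economy of exposition (it reuses a lemma already needed elsewhere and parallels the proof of Corollary 6.1 of \cite{Leg15}), and it isolates the observation that prime divisors of an irreducible factor depend only on the field generated by one root, which is the same field-theoretic fact your conjugation trick makes explicit.
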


\noindent
For example, conditions (1), (2) and (3) hold if

\vspace{0.5mm}

\noindent
- $P(T)$ is irreducible over $F$ and $N \geq 2$ (with $r=s=1$) or if

\vspace{0.5mm}

\noindent
- $P(T)$ has no root in $F$ and $N=4$. Indeed, by the previous case, we may assume $s \geq 2$. As $P(T)$ has no root in $F$ and $N=4$, one has $s=2$ and $P_1(T)$, $P_2(T)$ both have degree 2. Hence $F_1=L_1$ and $F_2=L_2$. If $F_1=F_2$, conditions (2) and (3) hold with $r=1$. If $F_1 \not=F_2$, condition (2) is empty with $r=2$ and condition (3) holds as the distinct quadratic fields $L_1$ and $L_2$ are linearly disjoint over $F$.

\begin{proof}
The proof below is similar to part of the proof of \cite[Corollary 6.1]{Leg15} which corresponds to the case $r=s$. We reproduce it below with the necessary adjustments for the higher generality.

Given $j \in \{1,\dots,r\}$, let $G_j$ be the Galois group of $P_j(T)$ over $F$, $N_j$ the degree of $P_j(T)$ and $\nu_j \, : \, G_j \rightarrow S_{N_j}$ the action of $G_j$ on the roots of $P_j(T)$. By condition (3), the Galois group of $P_1(T) \dots P_r(T)$ over $F$ is isomorphic to $G_1 \times \cdots \times G_r$ and $\nu_1 \times \dots \times \nu_r : G_1 \times \dots \times G_r \rightarrow S_{N_1+\cdots+N_r}$ corresponds to its action on the roots of $P_1(T) \dots P_r(T)$. Given $j \in \{1,\dots,r\}$, the irreducible polynomial $P_j(T)$ has degree $N_j \geq 2$ (condition (1)). Then there exists some element $g_j \in G_j$ such that $\nu_j(g_j)$ has no fixed point. Hence $(g_1,\dots,g_r)$ is an element of the Galois group of $P_1(T) \dots P_r(T)$ over $F$ which fixes no root of $P_1(T) \dots P_r(T)$. We may then use the implication (1) $\Rightarrow$ (3) of Lemma \ref{Tchebotarev} to obtain that there exist infinitely many non-zero prime ideals of $O_F$ each of which is not a prime divisor of $P_1(T) \dots P_r(T)$. By condition (2), $P_1(T) \dots P_r(T)$ and $P(T)$ have the same prime divisors (up to finitely many). Hence the polynomial $P(T)$ also satisfies condition (3) of Lemma \ref{Tchebotarev}. It then remains to apply the implication (3) $\Rightarrow$ (1) of Lemma \ref{Tchebotarev} to conclude.
\end{proof}

\subsubsection{Statement of Proposition \ref{app 2}}

The following result rests on Lemma \ref{Tchebotarev} and \cite[Proposition 3.5]{Leg16a}.

\begin{proposition} \label{app 2}
Assume that $P(T)$ is monic and neither 0 nor 1 is a root of $P(T)$. Then there exist infinitely many positive integers $k$ such that the Galois group of $P(T^k)$ over $F$ satisfies condition {\rm{(H)}}.
\end{proposition}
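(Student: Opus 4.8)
The plan is to reduce Proposition~\ref{app 2} to Lemma~\ref{Tchebotarev} together with \cite[Proposition 3.5]{Leg16a}, which I will assume provides a criterion for a power-substitution $P(T^k)$ to have ``many'' non-prime-divisors, or equivalently (via Lemma~\ref{Tchebotarev}) to satisfy condition (H). Concretely, by the equivalence (1)$\Leftrightarrow$(3) of Lemma~\ref{Tchebotarev}, it suffices to exhibit infinitely many positive integers $k$ for which $P(T^k)$ has infinitely many non-zero prime ideals of $O_F$ that are not prime divisors of it. So the whole argument will be phrased in terms of prime divisors rather than Galois groups.

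First I would fix notation: write $P(T)$ in its monic factorization over $F$, note that since neither $0$ nor $1$ is a root, $P(0)\in O_F\setminus\{0\}$ and $P(1)\in O_F\setminus\{0\}$, and recall that a prime $\mathcal{P}$ (with $v_\mathcal{P}$ nonnegative on the coefficients) is a prime divisor of $P(T^k)$ iff the reduction of $P(T^k)$ mod $\mathcal{P}$ has a root in $O_F/\mathcal{P}$, i.e.\ iff there is $t_0\in O_F/\mathcal{P}$ with $P(t_0^{\,k})=0$ in $O_F/\mathcal{P}$; since $P$ is monic with $a_N$ a unit mod $\mathcal{P}$ here, this is iff some $k$-th power in the residue field is a root of $\bar P$. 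Thus $\mathcal{P}$ is \emph{not} a prime divisor of $P(T^k)$ precisely when no residue of a root of $\bar P$ lies in the image of the $k$-th power map on $(O_F/\mathcal{P})$. The second step is to invoke \cite[Proposition 3.5]{Leg16a}: I expect it to say exactly that, under the hypotheses ``$P$ monic, $0$ and $1$ not roots,'' there are infinitely many $k$ such that $P(T^k)$ has a positive-density (hence infinite) set of non-prime-divisors — this is the technical heart, and I would apply it verbatim. The third step is then purely formal: for each such $k$, apply the implication (3)$\Rightarrow$(1) of Lemma~\ref{Tchebotarev} to the polynomial $P(T^k)$ (after replacing it by the product of its distinct irreducible factors over $F$, as in the proof of that lemma, which does not change its prime divisors up to finitely many) to conclude that the splitting field of $P(T^k)$ over $F$ satisfies condition (H). Since there are infinitely many such $k$, this proves the proposition.

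The main obstacle — and the reason the proposition is not immediate — is the input \cite[Proposition 3.5]{Leg16a}: producing \emph{infinitely many} exponents $k$ for which the power substitution kills all prime divisors of $\bar P$ is a genuine Diophantine/Galois statement. Morally one wants, for a suitable arithmetic progression of $k$, a positive-density set of primes $\mathcal{P}$ for which the $k$-th power subgroup of $(O_F/\mathcal{P})^\times$ avoids the finitely many residues of the roots of $P$; a Chebotarev argument in a tower of Kummer-type extensions (adjoining $k$-th roots of the roots of $P$, together with roots of unity) gives this, and one must rule out the degenerate cases where a root of $P$ is itself a root of unity or zero — which is exactly why $0$ and $1$ are excluded and, implicitly, why only certain $k$ work. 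Since the excerpt lets me assume that proposition, I would not reprove it; the remaining steps are bookkeeping around Lemma~\ref{Tchebotarev} and the monic/unit-leading-coefficient dictionary between ``prime divisor'' and ``root of the reduction.''
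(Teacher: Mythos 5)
Your proposal matches the paper's treatment: the paper gives no further argument for Proposition~\ref{app 2} beyond attributing it to Lemma~\ref{Tchebotarev} together with \cite[Proposition 3.5]{Leg16a}, which is precisely your reduction --- use the cited proposition to produce infinitely many $k$ for which $P(T^k)$ has infinitely many non-zero prime ideals of $O_F$ that are not prime divisors, then apply the implication (3)~$\Rightarrow$~(1) of Lemma~\ref{Tchebotarev} to get condition (H). Your bookkeeping (monic leading coefficient, the prime-divisor/root-of-the-reduction dictionary, passing to the product of distinct irreducible factors) is consistent with how the paper uses these notions, so the proposal is correct and takes essentially the same route.
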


\noindent
Note that both assumptions $P(0) \not=0$ and $P(1) \not=0$ are necessary for the conclusion to hold. Moreover, explicit sufficient conditions on a given positive integer $k$ for the polynomial $P(T^k)$ to satisfy condition (H) can be given; see \cite[Propositions 4.1, 4.2 and 4.3]{Leg16a}.

\section{Proof of Corollary \ref{intro 2}}

The aim of this section is to prove Corollary \ref{thm main 2} below which is a more general version of Corollary \ref{intro 2}. Let $F$ be a number field, $O_F$ the integral closure of $\Zz$ in $F$, $n \geq 2$ an integer and $N \geq 1$ a multiple of $n$.

\subsection{Statement of Corollary \ref{thm main 2}}

Given a real number $H \geq 1$, let $$S(N,H)$$ be the set of all degree $N$ polynomials $P(T)$ with coefficients in $O_F$, height\footnote{We fix beforehand a $\Zz$-basis of $O_F$.} at most $H$ and whose roots all have multiplicity $\leq n-1$. Let 
$$S'(N,H)$$ be the subset of all elements $P(T)$ in $S(N,H)$ such that there exists a set of non-zero prime ideals of $O_F$ that has positive density and that satisfies condition ($**$) of Theorem \ref{thm main}.

\begin{corollary} \label{thm main 2}
One has
$$\frac{|S'(N,H)|}{|S(N,H)|} = 1 - O \bigg(\frac{\log(H)}{\sqrt{H}} \bigg), \quad H \to \infty.$$
\end{corollary}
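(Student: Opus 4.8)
The plan is to bound the complement $S(N,H)\setminus S'(N,H)$ from above and $S(N,H)$ from below, each by an explicit power of $H$, and then divide. For the reduction step, note that by Theorem \ref{thm main} every $P(T)\in S(N,H)$ satisfying condition (H) lies in $S'(N,H)$, so $S(N,H)\setminus S'(N,H)$ is contained in the set of $P(T)\in S(N,H)$ for which (H) fails. Moreover, if $P(T)$ is separable and its Galois group over $F$, acting on the $N$ roots, is the full symmetric group $S_N$, then the $N$-cycle of $S_N$ fixes no root (recall $N\ge n\ge 2$), so (H) holds. Hence
\[
S(N,H)\setminus S'(N,H)\ \subseteq\ A(N,H)\ \cup\ B(N,H),
\]
where $A(N,H)$ is the set of non-separable elements of $S(N,H)$ and $B(N,H)$ the set of separable elements of $S(N,H)$ whose Galois group over $F$ is a proper subgroup of $S_N$.

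The two pieces are estimated as follows. The non-separable polynomials lie on the discriminant hypersurface $\{\Delta(P)=0\}$ inside the space of polynomials of degree $\le N$, so a routine lattice-point count gives $|A(N,H)|=O(H^{N})$. For $B(N,H)$ I would invoke the quantitative form of the Hilbert irreducibility theorem due to Cohen \cite{Coh81b}: the generic polynomial $a_0+a_1T+\cdots+a_NT^{N}$ has Galois group $S_N$ over $F(a_0,\dots,a_N)$, and the number of specializations $(a_0,\dots,a_N)\in O_F^{N+1}$ of height $\le H$ for which the Galois group drops is $O(H^{N+1/2}\log(H))$. The key point is that the height normalization fixed in \S2.3 (raising the coordinates of an element of $O_F$ with respect to a $\Zz$-basis to the power $\delta=[F:\Qq]$) is precisely what makes the number of elements of $O_F$ of height $\le H$ be $\asymp H$, hence the parameter count $\asymp H^{N+1}$, so that Cohen's exceptional bound has the stated shape uniformly in $F$. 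Thus $|B(N,H)|=O(H^{N+1/2}\log(H))$, and altogether $|S(N,H)\setminus S'(N,H)|=O(H^{N+1/2}\log(H))$.

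For the lower bound, the number of tuples $(a_0,\dots,a_N)\in O_F^{N+1}$ of height $\le H$ is $\asymp H^{N+1}$; the conditions $a_N\ne 0$ and ``all roots have multiplicity $\le n-1$'' (the latter being implied by $\Delta(P)\ne 0$) each exclude only $O(H^N)$ tuples, so $|S(N,H)|=c\,H^{N+1}+O(H^N)$ with $c>0$. Dividing yields $|S'(N,H)|/|S(N,H)|=1-O(H^{-1/2}\log H)=1-O(\log(H)/\sqrt{H})$.

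I expect the only delicate point to be the middle step: matching the specific degree-$\delta$ height of \S2.3 against the hypotheses of Cohen's theorem so as to obtain the exceptional count with exponent exactly $N+1/2$ over a total of $\asymp H^{N+1}$ parameter tuples — this is what forces the error term to be $O(\log(H)/\sqrt{H})$ independently of the number field $F$. Everything else is elementary counting of $O_F$-points in boxes and on a hypersurface, together with the observation that a separable polynomial with Galois group $S_N$ automatically satisfies (H).
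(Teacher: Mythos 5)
Your argument is correct and follows essentially the same route as the paper: reduce via Theorem \ref{thm main} to counting polynomials that are non-separable or have Galois group a proper subgroup of $S_N$, bound these by Cohen's quantitative Hilbert irreducibility theorem \cite[Theorem 2.1]{Coh81b}, and compare with $|S(N,H)|\asymp H^{N+1}$ under the height of \S2.3. The only (immaterial) difference is that you bound the non-separable polynomials separately by a hypersurface count, whereas the paper absorbs them into the single exceptional count ${\rm Er}(N,H)=O(H^{N+1/2}\log H)$.
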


\subsection{Proof of Corollary \ref{thm main 2}}

Given a positive real number $H$, an element $P(T)$ of $S(N,H)$ is in $S'(N,H)$ if it is separable and its Galois group over $F$, viewed as a permutation group of the roots, contains a $N$-cycle (Theorem \ref{thm main}). In particular, one has
\begin{equation} \label{4.1}
|S(N,H)| - |S'(N,H)| \leq {\rm{Er}}(N,H)
\end{equation}
where $${\rm{Er}}(N,H)$$
denotes the number of degree $N$ polynomials with coefficients in $O_F$, height at most $H$, whose roots all have multiplicity at most $n-1$ and that are either non-separable or that are separable and have Galois group isomorphic to a proper subgroup of $S_N$. Given algebraically independent indeterminates $T_0, \dots, T_N$ and $Y$, the Galois group of $$T_0 + T_1 Y + \dots + T_{N-1} Y^{N-1} + T_NY^N$$ over $F(T_0,\dots,T_N)$ is isomorphic to $S_N$. \cite[Theorem 2.1]{Coh81b} then provides
\begin{equation} \label{4.2}
{\rm{Er}}(N,H) = O(H^{N + (1/2)} \cdot \log(H)) , \quad H \to \infty.
\end{equation}
Next, combine \eqref{4.1} and \eqref{4.2} to get
$$1 - \frac{|S'(N,H)|}{|S(N,H)|} = \frac{O(H^{N+(1/2)} \cdot \log(H))}{|S(N,H)|}.$$
It then remains to use that $$|S(N,H)| \sim 2^{[F:\Qq] \cdot (N+1)} \cdot H^{N+1}$$ as $H$ tends to $\infty$ to complete the proof of Corollary \ref{thm main 2}.

\section{Proof of Corollary \ref{intro 4}}

This section is organized as follows. In \S5.1, we state Proposition \ref{para} which is a more general version of Corollary \ref{intro 4}. Proposition \ref{para} is then proved in \S5.2. Finally, \S5.3 is devoted to Proposition \ref{para 3} which is a quantitative version of Proposition \ref{para} in the case $n=2$ and $F=\Qq$.

Let $n \geq 2$ be an integer, $F$ a number field, $O_F$ the integral closure of $\Zz$ in $F$, $T$ an indeterminate and $P(T) \in O_F[T]$ a non-constant degree $N$ polynomial. Denote the roots of $P(T)$ by $t_1, \dots,t_N$, the leading coefficient by $a_N$ and the splitting field of $P(T)$ over $F$ by $L$. Set
$$P(T)=P_1(T)^{e_1} \cdots P_s(T)^{e_s}$$ where $e_1, \dots, e_s$ are positive integers and $P_1(T), \dots, P_s(T)$ are distinct polynomials that are irreducible over $F$. Set $$e={\rm{gcd}}(n,e_1,\dots,e_s),$$ $$n'=\frac{n}{e}, \quad e'_1=\frac{e_1}{e}, \dots, e'_s=\frac{e_s}{e}.$$
Finally, fix a $n$-th root $\sqrt[n]{P(T)}$ of $P(T)$ and set $E=F(T)(\sqrt[n]{P(T)}).$

\subsection{Statement of Proposition \ref{para}}

Given a non-zero prime ideal $\mathcal{P}$ of $O_F$, denote the associated valuation over $F$ by $v_\mathcal{P}$.

\begin{proposition} \label{para}
Assume that the following conditions hold:

\vspace{0.5mm}

\noindent
{\rm{(hyp-1)}} $n$ divides $N$,

\vspace{0.5mm}

\noindent
{\rm{(hyp-2)}} $e_j \leq n-1$ for each $j \in \{1, \dots,s\}$,

\vspace{0.5mm}

\noindent
{\rm{(hyp-3)}} $F$ contains the $n'$-th roots of unity,

\vspace{0.5mm}

\noindent
{\rm{(hyp-4)}} $\bigcup_{j=1}^N {\rm{Gal}}(L/F(t_j)) \not= {\rm{Gal}}(L/F).$

\vspace{0.5mm}

\vspace{0.5mm}

\noindent
Then the following two conclusions hold.

\vspace{0.5mm}

\noindent
{\rm{(1)}} The extension $E/F(T)$ is $F$-regular, Galois and its Galois group is isomorphic to $\Zz/n'\Zz$.

\vspace{0.5mm}

\noindent
{\rm{(2)}} There exists a set $\mathcal{S}$ of non-zero prime ideals of $O_F$ that has positive density and that satisfies the following property. 

\vspace{0.5mm}

\noindent
{\rm{($***$)}} Let $d$ be in $O_F \setminus \{0\}$. Assume that there exists a prime ideal $\mathcal{P} \in \mathcal{S}$ such that $v_\mathcal{P}(d) >0$ and $n'$ does not divide $v_\mathcal{P}(d)$. Then the extension $F(\sqrt[n']{d})/F$ does not occur as a specialization of $E/F(T)$. 

\vspace{0.5mm}

\noindent
In particular, the extension $E/F(T)$ is not $H$-parametric for each non-trivial subgroup $H$ of $\Zz/n'\Zz$.
\end{proposition}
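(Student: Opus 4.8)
The plan is to reduce assertion (1) to Kummer theory over $F(T)$ and assertion (2) to a ramification computation over $F$, using throughout that $P(T)=Q(T)^{e}$ where $Q(T):=\prod_{j=1}^{s}P_j(T)^{e'_j}$.

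For (1): one first checks $Q(T)\in O_F[T]$ (from $Q(T)^{e}=P(T)$ by Gauss's lemma) and $(\sqrt[n']{Q(T)})^{n}=Q(T)^{e}=P(T)$, so $E=F(T)(\sqrt[n']{Q(T)})$. The key point is that $Q(T)$ has order exactly $n'$ in $\overline{\Qq}(T)^{\times}/(\overline{\Qq}(T)^{\times})^{n'}$: over $\overline{\Qq}$ the $P_j$ are pairwise coprime with simple roots, so $Q(T)^{k}$ is an $n'$-th power in $\overline{\Qq}(T)$ iff $n'\mid k\,e'_j$ for all $j$, which — because $\gcd(n',e'_1,\dots,e'_s)=1$, a consequence of $e=\gcd(n,e_1,\dots,e_s)$ — is equivalent to $n'\mid k$. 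Hence $[\overline{\Qq}(T)(\sqrt[n']{Q(T)}):\overline{\Qq}(T)]=n'$, so $[E:F(T)]=n'$ and $E/F$ is regular; and since $F$ contains $\mu_{n'}$ (hyp-3), $E/F(T)$ is a Kummer extension, hence Galois with $\Gal(E/F(T))$ embedding into $\mu_{n'}$, whence $\Gal(E/F(T))\cong\Zz/n'\Zz$ on comparing orders.

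For (2): since $P(T)$ and $Q(T)$ have the same roots and the same splitting field $L$ over $F$, hypothesis (hyp-4) is exactly condition (1) of Lemma~\ref{Tchebotarev} for $Q(T)$, so by the implication (1)$\Rightarrow$(2) there is a positive-density set of non-zero primes of $O_F$, none of which is a prime divisor of $Q(T)$; discard from it the finitely many primes $\mathcal P$ with $v_\mathcal{P}(n')>0$ or $v_\mathcal{P}(a)\neq0$ (where $a$ is the leading coefficient of $Q(T)$, so $a^{e}=a_N\neq0$), and call the resulting positive-density set $\mathcal S$. The heart of the argument is the claim that $v_\mathcal{P}(Q(t_0))\equiv0\pmod{n'}$ for every $\mathcal P\in\mathcal S$ and every $t_0\in F$: if $v_\mathcal{P}(t_0)\geq0$ this holds because $\mathcal P$ is not a prime divisor of $Q(T)$ (and $Q(T)$ has no root in $F$, as (hyp-4) forces; in particular $Q(t_0)\neq0$), while if $v_\mathcal{P}(t_0)<0$ the leading term of $Q(t_0)$ dominates $\mathcal P$-adically, giving $v_\mathcal{P}(Q(t_0))=(\deg Q)\cdot v_\mathcal{P}(t_0)$, and $\deg Q=N/e$ is divisible by $n'=n/e$ since $n\mid N$ (hyp-1). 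Granting the claim, for any non-branch point $t_0\in\Pp^1(F)$ the specialization $E_{t_0}$ equals $F(\sqrt[n']{Q(t_0)})$ if $t_0\in F$ — here one uses (hyp-2), which forces every root of $Q(T)$ to have multiplicity $<n'$, hence to be a branch point, so that $Q(t_0)\neq0$ — and equals $F(\sqrt[n']{a})$ if $t_0=\infty$ (a non-branch point, as $n'\mid\deg Q$); in either case, by the claim, $E_{t_0}=F(\sqrt[n']{u})$ for some $\mathcal P$-unit $u$, hence $E_{t_0}/F$ is unramified at $\mathcal P$ (Hensel's lemma, using $\mathcal P\nmid n'$). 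On the other hand, if $d\in O_F\setminus\{0\}$ has $v_\mathcal{P}(d)>0$ and $n'\nmid v_\mathcal{P}(d)$ with $\mathcal P\in\mathcal S$, then $F(\sqrt[n']{d})/F$ is tamely ramified at $\mathcal P$, with ramification index $n'/\gcd(n',v_\mathcal{P}(d))>1$. Since ramification at $\mathcal P$ is an invariant of an extension of $F$, $F(\sqrt[n']{d})/F$ cannot be isomorphic to any $E_{t_0}/F$, i.e.\ it does not occur as a specialization of $E/F(T)$; this is ($***$).

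Finally, for the last assertion: if $n'=1$ it is vacuous, so assume $n'\geq2$, and let $H$ be a non-trivial subgroup of $\Zz/n'\Zz$, of order $m$. Choose $\mathcal P\in\mathcal S$, an element $d_0\in\mathcal P\setminus\mathcal P^{2}$, and set $d=d_0^{\,n'/m}$; then $v_\mathcal{P}(d)=n'/m$ meets the hypotheses of ($***$), so $F(\sqrt[n']{d})/F$ is not a specialization of $E/F(T)$, while Kummer theory (hyp-3) shows it is cyclic of degree equal to the order of $d$ in $F^{\times}/(F^{\times})^{n'}$, namely $m$, hence is an $H$-extension of $F$; therefore $E/F(T)$ is not $H$-parametric. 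I expect the one genuinely delicate step to be the ramification dichotomy above — in particular the congruence $v_\mathcal{P}(Q(t_0))\equiv0\pmod{n'}$, whose "point at infinity" case is precisely where $n\mid N$ enters — together with keeping track of which finitely many primes must be removed from $\mathcal S$ (the divisors of $n'$ and of the leading coefficient of $Q$); assertion (1) is routine once one notices $P=Q^{e}$.
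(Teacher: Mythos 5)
Your proof is correct, but for conclusion (2) it follows a genuinely different route from the paper. The paper first reduces to the case $e=1$ (Proposition \ref{para 2}, via exactly your polynomial $Q=P_0$), proves irreducibility of $Y^{n}-P(T)$ by Capelli's lemma, computes branch points and specializations (Lemmas \ref{lemma 1} and \ref{lemma 2}), and then \emph{translates} ``$F(\sqrt[n]{d})/F$ is a specialization of $E/F(T)$'' into ``some twist $C_{d^{m}\cdot P(T)}$ has an $F$-rational point with $y\neq 0$'' (Lemma \ref{lemma 3}, with the Kummer exponent $m$ coprime to $n$), so that conclusion (2) becomes a direct application of Theorem \ref{thm main}. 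You instead bypass Theorem \ref{thm main} and the twist correspondence entirely: you prove the stronger intermediate fact that \emph{every} specialization $E_{t_0}=F(\sqrt[n']{Q(t_0)})$ (and $E_\infty=F(\sqrt[n']{a})$) is unramified at each $\mathcal{P}\in\mathcal{S}$, because $v_\mathcal{P}(Q(t_0))\equiv 0 \pmod{n'}$ --- your case split on $v_\mathcal{P}(t_0)\geq 0$ versus $v_\mathcal{P}(t_0)<0$, using that $\mathcal{P}$ is not a prime divisor of $Q$ and that $n'\mid \deg Q$, is precisely the valuation computation from the paper's proof of Theorem \ref{thm main}, redone inline --- while $F(\sqrt[n']{d})/F$ is ramified at $\mathcal{P}$; your replacement of Capelli by the order of $Q$ in $\overline{\Qq}(T)^{\times}/(\overline{\Qq}(T)^{\times})^{n'}$, and of the paper's Eisenstein step by the order of $d$ in $F^{\times}/(F^{\times})^{n'}$, are equivalent and if anything cleaner (no special case for $4\mid n$). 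What the paper's route buys is that Proposition \ref{para} becomes literally an application of the main theorem on twists, with Lemma \ref{lemma 3} of independent interest; what your route buys is a shorter, self-contained ramification argument yielding the sharper statement that specializations are unramified on $\mathcal{S}$. The only places where you lean on standard facts without proof are the branch-point determination and the explicit specialization formulas (the content of Lemmas \ref{lemma 1} and \ref{lemma 2}); you cite the correct hypotheses ((hyp-2), (hyp-3), $n'\mid\deg Q$) for them, so this is a matter of detail rather than a gap, but in a full write-up those short arguments should be included, as should the content/Gauss-lemma remark justifying $Q\in O_F[T]$ that the paper itself leaves implicit.
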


\subsection{Proof of Proposition \ref{para}}

We break the proof into two parts.

\subsubsection{Proof of Proposition \ref{para} under an extra assumption}

First, we prove the following version of Proposition \ref{para}.

\begin{proposition} \label{para 2}
The two conclusions of Proposition \ref{para} hold under conditions {\rm{(hyp-1)}}, {\rm{(hyp-2)}}, {\rm{(hyp-3)}}, {\rm{(hyp-4)}} and {\rm{(hyp-5)}} with

\vspace{0.5mm}

\noindent
{\rm{(hyp-5)}} $e=1$ ({\it{i.e.,}} $n'=n$).
\end{proposition}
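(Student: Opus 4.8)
The plan is to prove Proposition \ref{para 2}, which assumes $e=1$, so that $n'=n$ and the polynomial $P(T)=P_1(T)^{e_1}\cdots P_s(T)^{e_s}$ has $\gcd(n,e_1,\dots,e_s)=1$. Conclusion (1) is a standard Kummer-theory computation: since $F$ contains the $n$-th roots of unity (hyp-3, with $n'=n$) and $P(T)$ is not an $m$-th power in $F(T)$ for any divisor $m>1$ of $n$ — this is where $e=1$ together with (hyp-2) (each $e_j\le n-1$, so no $P_j(T)^{e_j}$ is already an $n$-th power) is used — the extension $E=F(T)(\sqrt[n]{P(T)})$ is cyclic of degree $n$ over $F(T)$, and it is $F$-regular because $\overline{F}(T)(\sqrt[n]{P(T)})/\overline{F}(T)$ is still degree $n$ (the same non-$m$-th-power argument applies over $\overline F$ since the $P_j$ stay non-associate and $P(T)$ has a root, e.g. handled via valuations at the places $T-t_j$ of $\overline F(T)$). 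So I would first record Kummer theory and the "not a proper power" lemma, then deduce (1).

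**The core argument for (2).** For conclusion (2), I would invoke condition (hyp-4) and the implication (1) $\Rightarrow$ (2) of Lemma \ref{Tchebotarev} applied to $P(T)$ (or to its radical $P_1(T)\cdots P_s(T)$, which has the same prime divisors up to finitely many and the same splitting field $L$) to produce a positive-density set $\mathcal{S}$ of primes $\mathcal{P}$ of $O_F$, none of which is a prime divisor of $P(T)$; shrinking $\mathcal{S}$ by finitely many primes, I also require $v_{\mathcal P}(a_N)=0$, $v_{\mathcal P}(P(0))=0$, $v_{\mathcal P}$ nonnegative on all coefficients of all the $P_j$, and $\mathcal{P}$ unramified in $L/F$ (and in the relevant branch-point data of $E/F(T)$). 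Now fix $d\in O_F\setminus\{0\}$ and $\mathcal{P}\in\mathcal{S}$ with $v_{\mathcal P}(d)>0$ and $n\nmid v_{\mathcal P}(d)$, and suppose for contradiction that $F(\sqrt[n]{d})/F$ is a specialization $E_{t_0}/F$ for some non-branch point $t_0\in\mathbb{P}^1(F)$. By the theory of specializations recalled in \S2.4, $E_{t_0}=F(\sqrt[n]{P(t_0)})$ (after clearing an $n$-th power denominator when $t_0\ne\infty$; the case $t_0=\infty$ is handled separately using $v_{\mathcal P}(a_N)=0$), so $F(\sqrt[n]{d})=F(\sqrt[n]{P(t_0)})$, which by Kummer theory forces $d\cdot P(t_0)^{n-k}$ to be an $n$-th power in $F$ for the appropriate unit $k\in(\Zz/n\Zz)^\times$ matching the two radicals; equivalently the point $[\,\cdot\,:t_0:1]$ lies on a twist $C_{d'\cdot P(T)}$ with $d'$ differing from $d$ by an $n$-th power and with $v_{\mathcal P}(d')=v_{\mathcal P}(d)$ still not divisible by $n$. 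Then I run exactly the valuation analysis from the proof of Theorem \ref{thm main} in \S3.2: at $\mathcal{P}$, either $t_0$ (the affine coordinate, or its reciprocal at $\infty$) has nonnegative valuation, forcing $v_{\mathcal P}(P(t_0))\ge 0$ and — since $\mathcal P$ is not a prime divisor — in fact $v_{\mathcal P}(P(t_0))=0$, whence $n\mid v_{\mathcal P}(d')$; or it has negative valuation, forcing $v_{\mathcal P}(P(t_0))=N\cdot v_{\mathcal P}(t_0)$ with $n\mid N$, again giving $n\mid v_{\mathcal P}(d')$. Either way we contradict $n\nmid v_{\mathcal P}(d')=v_{\mathcal P}(d)$. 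This proves ($***$).

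**Wrapping up and the main obstacle.** The final clause — $E/F(T)$ is not $H$-parametric for any non-trivial subgroup $H\le\Zz/n\Zz$ — follows because such an $H$ is cyclic of order some $m\mid n$, $m>1$, and $H$-subextensions of cyclic extensions of degree $n$ that arise as specializations of $E/F(T)$ are of the form $F(\sqrt[m]{P(t_0)})$; one picks $d\in O_F$ with $v_{\mathcal P}(d)=1$ for a single $\mathcal P\in\mathcal S$ and checks $F(\sqrt[m]{d})/F$ is a genuine degree-$m$ extension not among the specializations, using that $n\nmid 1$ implies $m\nmid v_{\mathcal P}(d)=1$ as well (since $m>1$) — so the same ($***$)-type valuation obstruction applies after replacing $n$ by $m$ throughout, noting $F$ contains the $m$-th roots of unity and $m\mid N$. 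I expect the main technical obstacle to be the bookkeeping around specializations: matching $E_{t_0}$ with the field $F(\sqrt[n]{P(t_0)})$ cleanly, correctly handling the weighted-projective-space denominators $z^{N/n}$ and the point at infinity $t_0=\infty$, and tracking the ambiguity of the radical up to $(\Zz/n\Zz)^\times$ so that the valuation congruence $n\mid v_{\mathcal P}(d')$ is genuinely equivalent to $n\mid v_{\mathcal P}(d)$. Once the specialization is identified with a point on a twist of $C_{P(T)}$, the arithmetic is precisely the already-established computation of \S3.2, so that part is routine.
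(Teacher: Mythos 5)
Your proposal is correct and follows essentially the same route as the paper: conclusion (1) via the Capelli/``not a proper power'' argument using $e=1$ together with Kummer theory, conclusion (2) by combining Lemma \ref{Tchebotarev} (equivalently Theorem \ref{thm main}) with the identification $E_{t_0}=F(\sqrt[n]{P(t_0)})$, $E_\infty=F(\sqrt[n]{a_N})$ and the Kummer ambiguity producing a point on a twist $C_{d^m\cdot P(T)}$ with $(m,n)=1$, and the non-parametric clause via $F(\sqrt[m]{d})=F\big(\sqrt[n]{d^{\,n/m}}\big)$ for $d$ with $v_{\mathcal P}(d)=1$. The only slip is the assertion $v_{\mathcal P}(d')=v_{\mathcal P}(d)$: in fact $v_{\mathcal P}(d')\equiv m\cdot v_{\mathcal P}(d)\pmod n$ for some $m$ prime to $n$, but the property you actually use, $n\nmid v_{\mathcal P}(d')$, still holds, exactly as in the paper's Lemma \ref{lemma 3} and \S5.2.1.3.
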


We break the proof into three parts.

\vspace{2mm}

\noindent
5.2.1.1. {\it{Proof of conclusion {\rm{(1)}}.}}
Assume that the polynomial $Y^n - {P}(T)$ is reducible over $\overline{\Qq}(T)$. If 4 does not divide $n$, then, by the Capelli lemma \cite[Chapter VI, \S9, Theorem 9.1]{Lan02}, there exists a prime number $p$ dividing $n$ and a polynomial $R(T)$ in $\overline{\Qq}[T]$ such that 
$${P}(T)= R(T)^p.$$ This implies that $e_1, \dots, e_s$ all are multiples of $p$, which cannot happen by condition (hyp-5). Now, if 4 divides $n$, then, by the Capelli lemma and the previous case, there exists $R(T)$ in $\overline{\Qq}[T]$ such that $${P}(T)=  -4 R(T)^4.$$ Then $e_1, \dots, e_s$ all are even, which contradicts condition (hyp-5). Hence $Y^n-P(T)$ is irreducible over $\overline{\Qq}(T)$, {\it{i.e.,}} $E/F(T)$ has degree $n$ and $E/F$ is regular. As $F$ contains the $n$-th roots of unity (conditions (hyp-3) and (hyp-5)) and $E/F(T)$ has degree $n$, the extension $E/F(T)$ is Galois and has Galois group isomorphic to $\Zz/n\Zz$.

\vspace{2mm}

\noindent
5.2.1.2. {\it{From specializations of $E/F(T)$ to rational points on twists of the superelliptic curve $C_{P(T)}$ and vice-versa.}} First, we determine the set of branch points of the extension $E/F(T)$.

\begin{lemma} \label{lemma 1}
The set of branch points of $E/F(T)$ is $\{t_1,\dots,t_N\}$.
\end{lemma}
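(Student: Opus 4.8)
The plan is to compute the ramification of the extension $E = F(T)(\sqrt[n]{P(T)})$ over $F(T)$ after base change to $\overline{\Qq}(T)$, using the standard theory of Kummer extensions of the rational function field. First I would recall that, since $F$ contains the $n$-th roots of unity (conditions (hyp-3) and (hyp-5)), the extension $\overline{\Qq}(T)(\sqrt[n]{P(T)})/\overline{\Qq}(T)$ is a cyclic Kummer extension of degree $n$ (irreducibility was established in \S5.2.1.1). For a Kummer extension $\overline{\Qq}(T)(\sqrt[n]{g})/\overline{\Qq}(T)$ with $g \in \overline{\Qq}(T)^\times$, a place $v$ of $\overline{\Qq}(T)$ ramifies if and only if $\gcd(v(g), n) \neq 1$, i.e.\ if and only if $n \nmid v(g)$ is not the whole story — precisely, the ramification index at $v$ equals $n / \gcd(n, v(g))$. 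So the plan is to go through each place of $\Pp^1(\overline{\Qq})$ and determine $v(P(T))$.

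The key steps, in order, are as follows. Writing $P(T) = a_N \prod_{j=1}^N (T - t_j) = a_N P_1(T)^{e_1} \cdots P_s(T)^{e_s}$, the only finite places where $v(P(T)) \neq 0$ are the places $(T - t_j)$ attached to the distinct roots; at a root which is a zero of $P_i$ of multiplicity $e_i$, the valuation of $P(T)$ is $e_i$, and by (hyp-2) we have $1 \le e_i \le n - 1$, so $n \nmid e_i$ and the place ramifies. Hence every $t_j$ is a branch point. At the place $T = \infty$, one has $v_\infty(P(T)) = -\deg P = -N$, and since $n \mid N$ by (hyp-1), $n$ divides $v_\infty(P(T))$, so $\infty$ is unramified. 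At every other finite place $(T - c)$ with $c \notin \{t_1, \dots, t_N\}$, we have $v(P(T)) = 0$, hence unramified. Therefore the set of branch points of $E/F(T)$, which is defined via ramification in the compositum with $\overline{\Qq}(T)$, is exactly $\{t_1, \dots, t_N\}$.

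I would also need to address a small subtlety: the branch points are by definition points of $\Pp^1(\overline{\Qq})$, and one should note they all lie in $\overline{\Qq}$ (indeed they are algebraic over $F$, being roots of $P(T) \in O_F[T]$), so the statement ``the set of branch points is $\{t_1, \dots, t_N\}$'' makes sense as written; also, the $t_j$ here are the roots listed with the understanding that the set $\{t_1,\dots,t_N\}$ is the set of distinct roots (repetition among the $t_j$ does not affect the set). The main obstacle — really the only non-mechanical point — is making sure the multiplicity bound (hyp-2) is invoked correctly: if some $e_i$ were divisible by $n$ the corresponding root would \emph{not} be a branch point, and this is exactly why (hyp-2) is imposed; conversely the degree hypothesis (hyp-1) is exactly what is needed to kill ramification at infinity. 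Once these valuation computations are assembled and the Kummer ramification criterion is cited, the lemma follows immediately.

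\begin{proof}
By \S5.2.1.1, the polynomial $Y^n - P(T)$ is irreducible over $\overline{\Qq}(T)$, so the compositum of $E$ with $\overline{\Qq}(T)$ is $\overline{\Qq}(T)(\sqrt[n]{P(T)})$, a Kummer extension of degree $n$ of $\overline{\Qq}(T)$ (recall $F$, hence $\overline{\Qq}$, contains the $n$-th roots of unity by (hyp-3) and (hyp-5)). For a place $v$ of $\overline{\Qq}(T)$, the ramification index of $\overline{\Qq}(T)(\sqrt[n]{P(T)})/\overline{\Qq}(T)$ at $v$ equals $n/{\rm{gcd}}(n, v(P(T)))$; in particular $v$ ramifies if and only if $n$ does not divide $v(P(T))$.

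Write $P(T) = a_N \cdot P_1(T)^{e_1} \cdots P_s(T)^{e_s}$ with $a_N \in O_F \setminus \{0\}$. For a finite place $v = (T - c)\, \overline{\Qq}[T - c]$ with $c \in \overline{\Qq}$, one has $v(P(T)) = e_i$ if $c$ is a root of $P_i(T)$ (for the unique such $i$), and $v(P(T)) = 0$ otherwise. In the first case, $1 \leq e_i \leq n - 1$ by (hyp-2), so $n$ does not divide $v(P(T))$ and $v$ ramifies; since the roots of $P_1(T) \cdots P_s(T)$ are exactly $t_1, \dots, t_N$, these are precisely the ramified finite places. In the second case $v(P(T)) = 0$, so $v$ is unramified. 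Finally, at the place $v = \infty$, one has $v(P(T)) = -\deg P(T) = -N$, which is divisible by $n$ by (hyp-1); hence $\infty$ is unramified. Therefore the set of branch points of $E/F(T)$ is $\{t_1, \dots, t_N\}$.
\end{proof}
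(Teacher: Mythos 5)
Your proof is correct, and it takes a somewhat different route from the paper's. Where the paper argues by hand in both directions --- showing a branch point must be a root of $P(T)$ via the multiple-root/splitting-field observation, showing each root is a branch point by checking that $\sqrt[n]{P_1(T)^{e_1}}$ does not lie in $\overline{\Qq}((T-t))$ because $1\le e_1\le n-1$, and disposing of $\infty$ by the substitution $U=1/T$ together with (hyp-1) --- you instead invoke the standard ramification-index formula for Kummer extensions, $e_v=n/\gcd(n,v(P(T)))$, applied to the degree-$n$ extension $\overline{\Qq}(T)(\sqrt[n]{P(T)})/\overline{\Qq}(T)$, and then simply compute $v(P(T))$ at every place: $e_i$ at a root of $P_i$ (not divisible by $n$ by (hyp-2)), $0$ at other finite places, and $-N$ at infinity (divisible by $n$ by (hyp-1)). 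This is legitimate because the formula requires the extension to have full degree $n$, and you correctly cite \S5.2.1.1 for the irreducibility of $Y^n-P(T)$ over $\overline{\Qq}(T)$; note also that $\mu_n\subset\overline{\Qq}$ automatically, so the Kummer hypothesis over $\overline{\Qq}(T)$ is free. Your approach buys uniformity (both inclusions and the point at infinity are handled by one valuation computation, with (hyp-1) and (hyp-2) entering in exactly the same places as in the paper) at the cost of citing a standard external result rather than giving the paper's self-contained Laurent-series argument. One cosmetic remark: writing $P=a_N\,P_1^{e_1}\cdots P_s^{e_s}$ tacitly normalizes the $P_i$ to be monic, whereas the paper's factorization absorbs the leading coefficient into the $P_i$; this is harmless, since nonzero constants have valuation $0$ at every place of $\overline{\Qq}(T)$ over $\overline{\Qq}$.
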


\begin{proof}
Let $t \in \overline{\Qq}$ be a branch point of $E/F(T)$. As $F$ contains the $n$-th roots of unity, $E$ is the splitting field over $F(T)$ of $Y^n-P(T)$. Then $Y^n-P(t)$ has a multiple root, {\it{i.e.,}} $t$ is a root of $P(T)$. 

Conversely, let $t \in \overline{\Qq}$ be a root of the polynomial $P(T)$, say of the polynomial $P_1(T)$. Then $t$ is a branch point of $E/F(T)$ if and only if 
$$\sqrt[n]{P_1(T)^{e_1} \cdot P_2(T)^{e_2} \cdots P_s(T)^{e_s}}$$ is not in $\overline{\Qq}((T-t))$. As $t$ is not a root of $P_2(T)^{e_2} \cdots P_s(T)^{e_s}$, $t$ is not a branch point of the extension $$F(T)(\sqrt[n]{P_2(T)^{e_2} \cdots P_s(T)^{e_s}})/F(T),$$ {\it{i.e.,}} $$\sqrt[n]{P_2(T)^{e_2} \cdots P_s(T)^{e_s}}$$ is in $\overline{\Qq}((T-t))$. Hence $t$ is a branch point of $E/F(T)$ if and only if $\sqrt[n]{P_1(T)^{e_1}}$ is not in $\overline{\Qq}((T-t))$.

Assume by contradiction that $\sqrt[n]{P_1(T)^{e_1}}$ is in $\overline{\Qq}((T-t))$. Then there exists $A(T-t) \in \overline{\Qq}[[T-t]]$ such that $$P_1(T)^{e_1}= A(T-t)^n.$$
We then get that the multiplicity of $t$ as a root of the polynomial $P_1(T)^{e_1}$, which is equal to $e_1$, is a multiple of $n$. This cannot happen as $1 \leq e_1 \leq n-1$ (condition (hyp-2)). Hence $\sqrt[n]{P_1(T)^{e_1}}$ is not in $\overline{\Qq}((T-t))$, {\it{i.e.,}} $t$ is a branch point of $E/F(T)$.

It remains to show that $\infty$ is not a branch point of $E/F(T)$. Set
$$P(T)=a_0 + a_1 T + \cdots + a_{N-1}T^{N-1} + a_N T^N.$$
One has
$$P(T)=(T^{N/n})^{n} \Big(\frac{a_0}{T^N}  + \frac{a_1}{T^{N-1}} + \cdots + \frac{a_{N-1}}{T} + a_N \Big).$$
Set $U=1/T$ and $Q(U)= a_N + a_{N-1} U + \cdots + a_1 U^{N-1} + a_0 U^N $. As $N/n$ is an integer (condition (hyp-1)), one has $E=F(U)(\sqrt[n]{Q(U)}).$ Since 0 is not a root of $Q(U)$, we get from the first part of the proof that $\infty$ is not a branch point of $E/F(T)$, thus ending the proof.
\end{proof}

Now, we make the specializations of the extension $E/F(T)$ explicit.

\begin{lemma} \label{lemma 2}
{\rm{(1)}} One has $E_{t_0} = F(\sqrt[n]{P(t_0)})$ for each $t_0 \in F \setminus \{t_1,\dots,t_N\}$.

\vspace{0.5mm}

\noindent
{\rm{(2)}} One has $E_\infty=F(\sqrt[n]{a_N}).$
\end{lemma}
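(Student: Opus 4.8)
The statement of Lemma \ref{lemma 2} asks to identify the specializations $E_{t_0}$ for finite $t_0 \notin \{t_1,\dots,t_N\}$ and the specialization $E_\infty$. Since $F$ contains the $n$-th roots of unity and $E = F(T)(\sqrt[n]{P(T)})$ is the splitting field of $Y^n - P(T)$ over $F(T)$ (this was already used in Lemma \ref{lemma 1}), the extension $E/F(T)$ is Galois with group $\Zz/n\Zz$, and specialization should commute with the obvious ``set $T = t_0$'' operation away from branch points. So the plan is to make this precise: show that the residue extension at a prime above $(T - t_0)$ is generated by the image of $\sqrt[n]{P(T)}$, which is an $n$-th root of $P(t_0) \in F^\times$.

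\emph{Part (1).} First I would fix $t_0 \in F \setminus \{t_1,\dots,t_N\}$. Since $t_0$ is not a root of $P(T)$, we have $P(t_0) \in F^\times$. Work in the valuation ring $F[T]_{(T-t_0)}$, which is a DVR with uniformizer $T - t_0$ and residue field $F$ (the residue map being $T \mapsto t_0$). The element $\beta := \sqrt[n]{P(T)} \in E$ satisfies $\beta^n = P(T)$, and $P(T)$ is a unit in $F[T]_{(T-t_0)}$ because $P(t_0) \neq 0$; hence $\beta$ is integral over this DVR and a unit in the integral closure. Passing to a prime $\mathcal{P}$ of the integral closure of $F[T]_{(T-t_0)}$ in $E$ lying over $(T-t_0)$, reduction gives $\overline{\beta}^{\,n} = P(t_0)$ in the residue field $E_{t_0}$, so $\overline{\beta}$ is an $n$-th root of $P(t_0)$. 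Conversely $E = F(T)[\beta]$, so $E_{t_0} = F(\overline{\beta})$, and therefore $E_{t_0} = F(\sqrt[n]{P(t_0)})$ for a suitable choice of $n$-th root; since $F$ contains the $n$-th roots of unity, the field $F(\sqrt[n]{P(t_0)})$ does not depend on the choice. (One should note $t_0$ is not a branch point by Lemma \ref{lemma 1}, so the specialization is well-defined.)

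\emph{Part (2).} For $t_0 = \infty$, reuse the substitution already set up at the end of the proof of Lemma \ref{lemma 1}: with $U = 1/T$ and $Q(U) = a_N + a_{N-1}U + \cdots + a_0 U^N$, one has $E = F(U)(\sqrt[n]{Q(U)})$, and the place $\infty$ of $F(T)$ corresponds to the place $U = 0$ of $F(U)$. Since $0$ is not a root of $Q$ (indeed $Q(0) = a_N \neq 0$) and $\infty$ is not a branch point (Lemma \ref{lemma 1}), part (1) applied to the polynomial $Q(U)$ at $t_0 = 0$ gives $E_\infty = F(\sqrt[n]{Q(0)}) = F(\sqrt[n]{a_N})$.

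\emph{Main obstacle.} The only subtlety is the bookkeeping that ``specialization of the function field extension'' genuinely coincides with ``substitute $T = t_0$ in $\beta^n = P(T)$'': one must check that $\beta$ is in the integral closure of the local ring and reduces correctly, i.e. that no ramification or residue-field extension is lost, which is exactly the content of $t_0$ not being a branch point. Given Lemma \ref{lemma 1} this is routine, so the proof is short; the rest is the elementary unit/reduction argument above and, for (2), the change of variable already recorded in the proof of Lemma \ref{lemma 1}.
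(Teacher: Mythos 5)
Your proof is correct, and part (2) is literally the paper's argument (the $U=1/T$, $Q(U)=a_N+a_{N-1}U+\cdots+a_0U^N$ substitution, which uses (hyp-1), followed by part (1) at $U=0$). For part (1) you take a more hands-on route: you reduce the Kummer generator $\beta=\sqrt[n]{P(T)}$ in the integral closure of $A=F[T]_{(T-t_0)}$, whereas the paper simply invokes the standard fact that, since $E$ is the splitting field of $Y^n-P(T)$ over $F(T)$ and $Y^n-P(t_0)$ is separable, the specialization $E_{t_0}$ is the splitting field of $Y^n-P(t_0)$ over $F$, and then applies (hyp-3). Your version is more self-contained, but it asserts the key step rather than proving it: the inference ``$E=F(T)[\beta]$, hence $E_{t_0}=F(\overline\beta)$'' is not a valid general principle, because the residue field at a prime of the integral closure $B$ need not be generated by the reduction of a ring generator when that prime divides the conductor of $A[\beta]$ in $B$ (compare $\Zz[\sqrt d]$ at $2$ with $d\equiv 5 \bmod 8$: the residue field is $\Ff_4$ while $\overline{\sqrt d}$ generates only $\Ff_2$). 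Here the step is legitimate, and the reason is exactly the separability of the reduced polynomial: the residue field of $A$ is $F$, of characteristic zero, and $P(t_0)\neq 0$, so the discriminant $\pm n^nP(T)^{n-1}$ of $Y^n-P(T)$ is a unit in $A$; hence $A[\beta]$ is \'etale over $A$, coincides locally with $B$, and its residue fields are generated by $\overline\beta$. Attributing this only to ``$t_0$ is not a branch point'' conflates two things: non-branching makes $E_{t_0}$ well defined, while the identification $E_{t_0}=F(\overline\beta)$ rests on the unit-discriminant observation just described (which in this Kummer situation happens to be equivalent to $P(t_0)\neq0$). With that sentence added, your argument is complete and in fact supplies a self-contained proof of the specialization property that the paper quotes implicitly.
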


\begin{proof}
Given $t_0 \in F \setminus \{t_1,\dots,t_N\}$, the polynomial $Y^n-P(t_0)$ is separable. As $E$ is the splitting field of $Y^n-P(T)$ over $F(T)$, the field $E_{t_0}$ is the splitting field of $Y^n-P(t_0)$ over $F$. Then, by condition (hyp-3), one has $E_{t_0} = F(\sqrt[n]{P(t_0)})$, as needed for part (1). The proof of part (2) is similar. Consider the polynomial $Q(U)$ from the last part of the proof of Lemma \ref{lemma 1}. As 0 is not a root of $Q(U)$, the polynomial $Y^n-Q(0)$ is separable. Hence the field $E_\infty$ is equal to its splitting field over $F$, {\it{i.e.,}} one has $E_\infty=F(\sqrt[n]{a_N})$, thus ending the proof.
\end{proof}

Finally, we connect specializations of the extension $E/F(T)$ and $F$-rational points on twists of $C_{P(T)}$.

\begin{lemma} \label{lemma 3}
Let $d$ be in $O_F \setminus \{0\}$. The following two conditions are equivalent:

\vspace{0.25mm}

\noindent
{\rm{(1)}} the extension $F(\sqrt[n]{d})/F$ occurs as a specialization of $E/F(T)$,

\vspace{0.5mm}

\noindent
{\rm{(2)}} there exists a positive integer $m$ such that $m < n$, $(m,n)=1$ and $C_{d^m \cdot {P}(T)}$ has a $F$-rational point $[y:t:z]$ with $y \not=0$.
\end{lemma}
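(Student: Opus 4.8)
The plan is to unwind both conditions into explicit statements about the value $P(t_0)$ of $P$ at a rational specialization point and reconcile them. By Lemma \ref{lemma 2}, the specializations of $E/F(T)$ at finite points $t_0 \in F \setminus \{t_1,\dots,t_N\}$ are exactly the fields $F(\sqrt[n]{P(t_0)})$, and the specialization at $\infty$ is $F(\sqrt[n]{a_N})$. Since $F$ contains the $n$-th roots of unity (condition (hyp-3), together with (hyp-5)), for $c \in F^\times$ the extensions $F(\sqrt[n]{c})/F$ and $F(\sqrt[n]{c'})/F$ coincide if and only if $c/c'$ is an $n$-th power times (an appropriate power coming from the cyclic structure); more precisely $F(\sqrt[n]{c}) = F(\sqrt[n]{c^m})$ for every $m$ with $(m,n)=1$, and conversely two such fields agree iff $c' \equiv c^m \bmod (F^\times)^n$ for some $m$ coprime to $n$ (Kummer theory). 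So condition (1) says: either there is $t_0 \in F \setminus \{t_1,\dots,t_N\}$ with $F(\sqrt[n]{d}) = F(\sqrt[n]{P(t_0)})$, or $F(\sqrt[n]{d}) = F(\sqrt[n]{a_N})$; equivalently, there is such a $t_0$ (resp. the point $\infty$) and an integer $m$ with $m<n$, $(m,n)=1$, such that $d^m/P(t_0)$ (resp. $d^m/a_N$) is an $n$-th power in $F^\times$.

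Next I would translate condition (2). Write a hypothetical $F$-rational point $[y:t:z]$ on $C_{d^m \cdot P(T)}$ with $y \not= 0$: this means $y^n = d^m \cdot P(t,z)$ with $(y,t,z) \in F^3 \setminus\{(0,0,0)\}$. If $z \not= 0$, set $t_0 = t/z$ and $y_0 = y/z^{N/n}$ (using $n \mid N$, condition (hyp-1)); then $y_0^n = d^m \cdot P(t_0)$, and since $y\neq0$ we have $P(t_0) \neq 0$, so $t_0 \notin \{t_1,\dots,t_N\}$, and $d^m/P(t_0) = (1/y_0)^{n}\cdot d^{2m}$... rather, $d^m \cdot P(t_0) = y_0^n$ exhibits $d^m P(t_0) \in (F^\times)^n$; combined with $d^m \in (F^\times)^n \Leftrightarrow d \in (F^\times)^n$ only when convenient, the cleaner statement is $P(t_0) \equiv d^{-m} \bmod (F^\times)^n$, hence $P(t_0) \equiv d^{m'} \bmod (F^\times)^n$ where $m' = n-m$ is again coprime to $n$ and $<n$; this gives $F(\sqrt[n]{P(t_0)}) = F(\sqrt[n]{d^{m'}}) = F(\sqrt[n]{d})$, so condition (1) holds. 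If instead $z = 0$, then $y^n = d^m a_N t^N$ with $t \neq 0$ (else $y = 0$), and dividing by $t^N$ and using $n\mid N$ gives $a_N \equiv d^{-m} \equiv d^{m'} \bmod (F^\times)^n$, so $F(\sqrt[n]{a_N}) = F(\sqrt[n]{d})$, which is the specialization $E_\infty$, so again (1) holds. This proves (2) $\Rightarrow$ (1). For the reverse implication, run the same computation backwards: given (1), pick the appropriate $t_0$ (or $\infty$) and $m$ coprime to $n$ with $1 \le m < n$ and $d^{n-m}/P(t_0) \in (F^\times)^n$ (resp. $d^{n-m}/a_N \in (F^\times)^n$); writing $d^{n-m} = u^n P(t_0)$ for suitable $u \in F^\times$ and rearranging, one checks $d^m \cdot P(t_0) = (\text{something})^n$ up to replacing $m$ by $n-m$, producing an $F$-rational point $[y:t:1]$ (resp. a point with $z=0$) on $C_{d^{m}\cdot P(T)}$ with $y\neq0$; the only care needed is to make sure the exponent of $d$ that appears is coprime to $n$, which is guaranteed since $n-m$ is coprime to $n$ whenever $m$ is.

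The main obstacle I anticipate is the bookkeeping with the coprime exponent $m$: the two conditions are naturally phrased with slightly different twists ($d$ versus $d^m$, and $m$ versus $n-m$), and one must use the Kummer-theoretic fact that for $(m,n)=1$ the field $F(\sqrt[n]{c})$ depends only on the class of $c$ in $F^\times/(F^\times)^n$ up to the automorphism $c \mapsto c^m$, so that the family of fields $\{F(\sqrt[n]{d^m}) : (m,n)=1,\, 1\le m<n\}$ is a single field $F(\sqrt[n]{d})$. Once this symmetry is in hand, matching up "$d^m P(t_0)$ is an $n$-th power for some coprime $m$" with "$F(\sqrt[n]{d}) = F(\sqrt[n]{P(t_0)})$" is immediate, and the separate treatment of the point at infinity (corresponding to the branch behaviour described in Lemma \ref{lemma 1} and the value $a_N$ from Lemma \ref{lemma 2}(2)) slots in cleanly. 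No deep input beyond Lemmas \ref{lemma 1} and \ref{lemma 2} and elementary Kummer theory is needed.
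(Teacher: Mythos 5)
Your argument is essentially the paper's own proof: you use Lemma \ref{lemma 2} to identify the specializations $E_{t_0}=F(\sqrt[n]{P(t_0)})$ and $E_\infty=F(\sqrt[n]{a_N})$, invoke Kummer theory (over $F\supseteq\mu_n$) to translate the field equality $F(\sqrt[n]{d})=F(\sqrt[n]{P(t_0)})$ into $d^m\cdot P(t_0)$ being an $n$-th power for some $m$ coprime to $n$, and for the converse dehomogenize via $t_0=t/z$, $y_0=y/z^{N/n}$ (using $n\mid N$) with the $z=0$ case matched to $\infty$ and the exponent swap $m\leftrightarrow n-m$, exactly as in the paper. The bookkeeping is correct, so this is the same approach and there is nothing to add.
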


\begin{proof}
First, assume that $F(\sqrt[n]{d})/F$ occurs as the specialization of $E/F(T)$ at $t_0$ for some $t_0 \in F\setminus \{t_1,\dots,t_N\}$. We may apply part (1) of Lemma \ref{lemma 2} to get $F(\sqrt[n]{d}) = F(\sqrt[n]{P(t_0)}).$ The Kummer theory then gives $$y^n=d^m \cdot P(t_0)$$
for some $y \in F\setminus \{0\}$ and some integer $m$ such that $1 \leq m < n$ and $(m,n)=1$ (see {\it{e.g.}} \cite[page 18]{Ade01}). Hence the $F$-rational point $[y:t_0:1]$ lies on $C_{d^m \cdot P(T)}$. In the case $F(\sqrt[n]{d})/F$ occurs as the specialization of $E/F(T)$ at $\infty$, a similar argument, combined with part (2) of Lemma \ref{lemma 2}, shows that there exists some $y \in F \setminus \{0\}$ and some integer $m$ such that $1 \leq m < n$, $(m,n)=1$ and the $F$-rational point $[y:1:0]$ lies on $C_{d^m \cdot P(T)}$.

Conversely, assume that there exists an integer $m$ such that $1 \leq m < n$, $(m,n)=1$ and $C_{d^m \cdot P(T)}$ has a $F$-rational point $[y:t:z]$ with $y \not=0$. First, assume that $z=0$. Then $y^n=d^m \cdot a_N t^N$. This provides $F(\sqrt[n]{d^{n-m}}) = F(\sqrt[n]{a_N})$. As $n-m$ is coprime to $n$, one has $F(\sqrt[n]{d^{n-m}})=F(\sqrt[n]{d})$ and, by part (2) of Lemma \ref{lemma 2}, one has $F(\sqrt[n]{a_N})=E_\infty$. Then $F(\sqrt[n]{d})=E_\infty$, as needed. Now, assume that $z \not=0$. We then get 
\begin{equation} \label{3.1}
y^n=d^m \cdot z^N \cdot P(t/z).
\end{equation}
As $y \not=0$, $t/z$ is not a branch point (Lemma \ref{lemma 1}). By \eqref{3.1}, one has $F(\sqrt[n]{d^{n-m}}) = F(\sqrt[n]{P(t/z)})$. But $F(\sqrt[n]{d^{n-m}})=F(\sqrt[n]{d})$ and $F(\sqrt[n]{P(t/z)})$ $=E_{t/z}$ (part (1) of Lemma \ref{lemma 2}). Hence $F(\sqrt[n]{d})= E_{t/z}$, as needed.
\end{proof}

\noindent
5.2.1.3. {\it{Proof of conclusion {\rm{(2)}}.}}  As conditions (hyp-1), (hyp-2) and (hyp-4) hold, we may apply Theorem \ref{thm main}. Then there exists a set $\mathcal{S}$ of non-zero prime ideals of $O_F$ that has positive density and that satisfies condition ($**$), {\it{i.e.,}} one has $C_{d \cdot P(T)}(F) = \emptyset$ for each element $d \in O_F \setminus \{0\}$ for which there exists a prime $\mathcal{P}$ in $\mathcal{S}$ such that $v_{\mathcal{P}}(d) > 0$ and $n {\not \vert} v_{\mathcal{P}}(d)$. Fix such a $d \in O_F \setminus \{0\}$ and a prime $\mathcal{P} \in \mathcal{S}$ such that $v_{\mathcal{P}}(d) > 0$ and $n {\not \vert} v_{\mathcal{P}}(d)$. Let $m$ be a positive integer such that $m < n$ and $(m,n)=1$. Then one has $v_{\mathcal{P}}(d^m) > 0$ and $n {\not \vert} v_{\mathcal{P}}(d^m)$. Hence $C_{d^m \cdot P(T)}(F) = \emptyset$. By the implication (1) $\Rightarrow$ (2) of Lemma \ref{lemma 3}, the extension $F(\sqrt[n]{d})/F$ does not occur as a specialization of $E/F(T)$.

Let $H$ be a non-trivial subgroup of $\Zz/n\Zz$, {\it{i.e.,}} $H=\Zz/n''\Zz$ for some integer $n'' \geq 2$ dividing $n$. It remains to show that one may require a Galois extension $F(\sqrt[n]{d})/F$ as above to have Galois group $H$. Let $d$ be in $O_F \setminus \{0\}$ such that $v_\mathcal{P}(d)=1$ for some prime ideal $\mathcal{P} \in \mathcal{S}$. Set $m={n/n''}$. Then
$$F(\sqrt[n]{d^m}) = F(\sqrt[n'']{d}).$$
Since $v_\mathcal{P}(d)=1$, one may apply the Eisenstein criterion to get that the polynomial $Y^{n''} - d$ is irreducible over $F$, {\it{i.e.,}} $F(\sqrt[n'']{d})/F$ has degree $n''$. Hence $F(\sqrt[n]{d^m})/F$ has Galois group $H$. By the previous paragraph, the extension $F(\sqrt[n]{d^m})/F$ does not occur as a specialization of $E/F(T)$ (as $n'' \geq 2$). Hence $E/F(T)$ is not $H$-parametric, as needed.

\subsubsection{Proof of Proposition \ref{para}}

Set $${P}_0(T) = P_1(T)^{e'_1} \cdots P_s(T)^{e'_s}.$$
Then $\sqrt[n]{P(T)}$ is a $n'$-th root $\sqrt[n']{{P}_0(T)}$ of $P_0(T)$ and one has $E=F(T)(\sqrt[n']{{P}_0(T)}).$ By condition (hyp-1), $n'$ divides ${\rm{deg}}(P_0(T))$ and, by condition (hyp-2), one has $e'_j \leq n'-1$ for each $j \in \{1,\dots,s\}$. Moreover, one has ${\rm{gcd}}(n',e'_1, \dots,e'_s)=1$. As the remaining conditions (hyp-3) and (hyp-4) also hold, we may apply Proposition \ref{para 2} to the extension $F(T)(\sqrt[n']{{P}_0(T)})/F(T)$, thus ending the proof of Proposition \ref{para}.

\subsection{A quantitative version in the case $n=2$ and $F=\Qq$}

\subsubsection{Statement of Proposition \ref{para 3}}

Let $T$ be an indeterminate. Given a $\Qq$-regular quadratic extension $E/\Qq(T)$, there exists a unique separable polynomial $P_E(T) \in \Zz[T]$ with squarefree content and such that $E=\Qq(T)(\sqrt{P_E(T)})$. Define the {\it{height of $E/\Qq(T)$}} as the height of the polynomial $P_E(T)$.

Given an even positive integer $N$ and a real number $H \geq 1$, let $$T(N,H)$$ be the set of all $\Qq$-regular quadratic extensions of $\Qq(T)$ with $N$ branch points and height at most $H$ \footnote{By the Riemann-Hurwitz formula, the number of branch points of a $\Qq$-regular quadratic extension of $\Qq(T)$ is necessarily even.}. Let $$T'(N,H)$$ be the subset of all elements in $T(N,H)$ such that there exists a set of prime numbers that has positive density and that satisfies condition ($***$) of Proposition \ref{para}.

\begin{proposition} \label{para 3}
One has
$$\frac{|T'(N,H)|}{|T(N,H)|} = 1 - O \bigg(\frac{\log(H)}{\sqrt{H}} \bigg), \quad H \to \infty.$$
\end{proposition}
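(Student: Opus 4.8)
The plan is to transfer Proposition \ref{para} (taken with $n=2$ and $F=\Qq$) to the language of quadratic extensions, and then to invoke the same quantitative input from \cite{Coh81b} that powered the proof of Corollary \ref{thm main 2}. The first step is to set up a dictionary. I would check that $E \mapsto P_E(T)$ is a bijection from the set of $\Qq$-regular quadratic extensions of $\Qq(T)$ onto the set of separable polynomials in $\Zz[T]$ of positive degree with squarefree content: injectivity is built into the definition of $P_E(T)$, and, conversely, if $P \in \Zz[T]$ is separable of positive degree with squarefree content, then $\Qq(T)(\sqrt{P})$ is $\Qq$-regular quadratic with associated polynomial $P$ itself, since two such polynomials whose ratio is a square in $\Qq(T)^\times$ must coincide (compare roots, then contents). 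Arguing as in Lemma \ref{lemma 1}, the branch points of $\Qq(T)(\sqrt{P})$ are the roots of $P$, together with $\infty$ precisely when $\deg P$ is odd; hence $E$ has exactly $N$ branch points if and only if $\deg P_E \in \{N, N-1\}$. As the height of $E/\Qq(T)$ is the height of $P_E(T)$, this identifies $T(N,H)$ with the set of separable $P \in \Zz[T]$ with squarefree content, height $\le H$ and degree $N$ or $N-1$.

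Next I would show $|T(N,H)| \gg H^{N+1}$. The number of degree-$N$ polynomials in $\Zz[T]$ of height $\le H$ is $\sim (2H)^{N+1}$; removing those with non-squarefree content costs only a factor $1 - 1/\zeta(2(N+1)) < 1$ (a routine sieve over the primes $p$ with $p^2$ dividing all coefficients), while \cite[Theorem 2.1]{Coh81b} shows that all but $O(H^{N+1/2}\log H)$ of them are separable with Galois group $S_N$ over $\Qq$ (as in the proof of Corollary \ref{thm main 2}). Hence the set $\mathcal{G}$ of separable degree-$N$ polynomials with squarefree content, height $\le H$ and Galois group $S_N$ has size $\ge c_N H^{N+1}$ for some $c_N > 0$ and all large $H$. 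For $P \in \mathcal{G}$, conditions (hyp-1)--(hyp-4) of Proposition \ref{para} all hold ($2 \mid N$; $P$ separable forces each exponent $e_j=1 \le n-1$; $\Qq$ contains the square roots of unity and $e=1$; and an $N$-cycle in $S_N$ witnesses condition (H)), so $\Qq(T)(\sqrt{P}) \in T'(N,H)$; therefore $|T(N,H)| \ge |T'(N,H)| \ge |\mathcal{G}| \gg H^{N+1}$.

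Finally I would bound $|T(N,H)| - |T'(N,H)| = |T(N,H) \setminus T'(N,H)|$. An element $E$ of $T(N,H)$ with $\deg P_E = N-1$ contributes to a set of size $\le (2H+1)^N = O(H^N)$; for $\deg P_E = N$, conditions (hyp-1), (hyp-2), (hyp-3) of Proposition \ref{para} hold automatically, so if $E \notin T'(N,H)$ — that is, if no positive-density set of primes satisfies ($***$) — then Proposition \ref{para} forces condition (H) to fail for $P_E$, hence (for a separable polynomial) forces its Galois group over $\Qq$ to be a proper subgroup of $S_N$; there are $O(H^{N+1/2}\log H)$ such $P_E$ of degree $N$ and height $\le H$ by \cite[Theorem 2.1]{Coh81b}. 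Thus $|T(N,H)| - |T'(N,H)| = O(H^{N})+O(H^{N+1/2}\log H) = O(H^{N+1/2}\log H)$, and combining with the lower bound yields
$$1 - \frac{|T'(N,H)|}{|T(N,H)|} = \frac{|T(N,H)| - |T'(N,H)|}{|T(N,H)|} = O\!\left(\frac{\log H}{\sqrt H}\right).$$
The analytically substantial ingredient, \cite[Theorem 2.1]{Coh81b}, is already in hand; the point requiring care is the first step, namely the precise correspondence between $\Qq$-regular quadratic extensions and reduced polynomials and the role of $\infty$ in the branch locus.
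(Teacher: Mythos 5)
Your proposal is correct and follows essentially the same route as the paper: identify $T(N,H)$ with separable polynomials of squarefree content, height at most $H$ and degree $N$ or $N-1$, and bound the complement of $T'(N,H)$ via Proposition \ref{para} together with Cohen's theorem \cite[Theorem 2.1]{Coh81b}, exactly as in the proof of Corollary \ref{thm main 2}. The only (harmless) deviation is in the denominator: the paper proves the full asymptotic $|T(N,H)|\sim C_N H^{N+1}$ through Lemmas \ref{lemma nt 1} and \ref{lemma nt 2}, whereas you observe that a lower bound $|T(N,H)|\gg H^{N+1}$, obtained by a routine sieve on the squarefree-content condition, already suffices for the stated error term.
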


\subsubsection{Proof of Proposition \ref{para 3}} 

First, we need the following two number theoretic lemmas. In the statements below, we denote the Riemann $\zeta$-function by $\zeta$ and fix an integer $n \geq 2$. Given a positive real number $H$, we set $\llbracket -H, H \rrbracket = \Zz \cap [-H, H]$.

\begin{lemma} \label{lemma nt 1}
One has
\begin{align*}
~&~|\{(a_0, \dots, a_n) \in \llbracket -H, H \rrbracket^{n+1} : a_n \not=0, \, {\rm{gcd}}(a_0,\dots,a_n)=1\}|  \\
&= \frac{2^{n+1}}{\zeta(n+1)} H^{n+1} + O(H^n), \quad H \to \infty.
\end{align*}
\end{lemma}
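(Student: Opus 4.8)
The plan is to count lattice points in the box $\llbracket -H,H\rrbracket^{n+1}$ subject to the two constraints $a_n\neq 0$ and $\gcd(a_0,\dots,a_n)=1$ by a standard M\"obius inversion (the ``squarefree/coprimality sieve'' computation). First I would drop the condition $a_n\neq 0$: the number of tuples with $a_n=0$ is $O(H^n)$, so it is absorbed into the error term, and it suffices to estimate the number of coprime $(n+1)$-tuples in $\llbracket -H,H\rrbracket^{n+1}$.

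Next I would write, for the number $C(H)$ of coprime tuples,
\[
C(H)=\sum_{d\geq 1}\mu(d)\,\bigl|\{(a_0,\dots,a_n)\in\llbracket -H,H\rrbracket^{n+1}: d\mid a_j\ \text{for all }j\}\bigr|.
\]
The inner cardinality is $(2\lfloor H/d\rfloor+1)^{n+1}$, which is $0$ once $d>H$, so the sum is finite. Expanding $(2\lfloor H/d\rfloor+1)^{n+1}=(2H/d)^{n+1}+O\bigl((H/d)^n\bigr)$ and summing against $\mu(d)$ gives a main term
\[
(2H)^{n+1}\sum_{d=1}^{\infty}\frac{\mu(d)}{d^{n+1}}=\frac{2^{n+1}H^{n+1}}{\zeta(n+1)},
\]
since $\sum_d \mu(d)d^{-(n+1)}=1/\zeta(n+1)$, together with a tail error from truncating this series at $d=H$ of size $O\!\bigl(H^{n+1}\sum_{d>H}d^{-(n+1)}\bigr)=O(H^{n})$ (here one uses $n\geq 2$, so $n+1\geq 3$, hence the tail is $O(H^{n+1}\cdot H^{-n})=O(H^n)$). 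The error contribution from the $O((H/d)^n)$ term is $O\!\bigl(H^n\sum_{d\leq H} d^{-n}\bigr)=O(H^n)$ as well (again the series $\sum d^{-n}$ converges since $n\geq 2$). Collecting everything yields $C(H)=\tfrac{2^{n+1}}{\zeta(n+1)}H^{n+1}+O(H^n)$, and adding the $O(H^n)$ from the $a_n=0$ tuples finishes the proof.

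I do not expect a genuine obstacle here: this is an entirely routine M\"obius sieve, and the only points that need a little care are (i) making sure the error terms really are $O(H^n)$ and not larger, which is exactly where the hypothesis $n\geq 2$ is used to guarantee convergence of $\sum d^{-n}$, and (ii) keeping the bookkeeping of the two separate $O(H^n)$ contributions (the tail of the M\"obius series and the $\lfloor H/d\rfloor$-versus-$H/d$ discrepancy) clean. If one preferred, the statement also follows immediately from the classical asymptotic for the number of visible lattice points in a homothetic box, but the self-contained M\"obius computation above is short enough to just carry out directly.
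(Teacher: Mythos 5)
Your M\"obius-sieve computation is essentially the paper's own argument: the paper simply defers to the lemma of Nymann's paper, whose proof is exactly this coprimality inversion, so approach and conclusion coincide. One small slip worth noting: for $d>H$ the inner cardinality is $1$ (the all-zero tuple), not $0$, so you should either keep the constraint $a_n\neq 0$ inside the sieve (making the count $(2\lfloor H/d\rfloor+1)^{n}\cdot 2\lfloor H/d\rfloor$, which does vanish for $d>H$) or subtract the zero tuple; the resulting discrepancy is $O(H)$ and is absorbed in the $O(H^{n})$ error term, so the result stands.
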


\begin{proof}
The proof is almost identical to the proof of the lemma of \cite[\S1]{Nym72} and is then left to the reader.
\end{proof}

\begin{lemma} \label{lemma nt 2}
There exists a positive constant $C_n$ such that 
$$|\{(a_0, \dots, a_n) \in \llbracket -H, H \rrbracket^{n+1} : a_n \not=0, \, {\rm{gcd}}(a_0,\dots,a_n) \, \, is \, \, squarefree\}| $$
is asymptotic to $$C_n \cdot H^{n+1}$$ as $H$ tends to $\infty$.
\end{lemma}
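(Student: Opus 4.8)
The statement concerns the counting function
$$A_n(H) := |\{(a_0,\dots,a_n)\in\llbracket -H,H\rrbracket^{n+1} : a_n\neq 0,\ \gcd(a_0,\dots,a_n)\text{ squarefree}\}|,$$
and we must show $A_n(H)\sim C_n\cdot H^{n+1}$ for some positive constant $C_n$. The natural approach is a Möbius-type sieve on the gcd, exactly as in the proof of Lemma \ref{lemma nt 1}, but now detecting ``squarefree'' rather than ``equal to $1$''. First I would fix the leading coordinate: since $a_n\neq 0$ forces at most $O(H^n)$ exceptional tuples when $a_n=0$, it suffices to estimate the count over all $(a_0,\dots,a_n)\in\llbracket -H,H\rrbracket^{n+1}$ with squarefree gcd and subtract those with $a_n=0$, which contributes only $O(H^n)$ and is absorbed into the error term.

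The key identity is that the indicator of ``$m$ is squarefree'' equals $\sum_{k^2\mid m}\mu(k)$. Classifying tuples by their gcd $m$, and using that the number of tuples in $\llbracket -H,H\rrbracket^{n+1}$ all divisible by a fixed integer $\ell$ is $(2\lfloor H/\ell\rfloor+1)^{n+1} = (2H/\ell)^{n+1}+O((H/\ell)^n)$, I would write
$$A_n(H) = \sum_{m\geq 1}\Big(\sum_{k^2\mid m}\mu(k)\Big)\,\#\{\gcd = m\} + O(H^n).$$
The cleanest route is to interchange the order of summation so that the squarefree condition is pulled out first: tuples with squarefree gcd are exactly those divisible by no prime square; equivalently, writing $g=\gcd$, the condition is $\sum_{k\geq 1}\mu(k)\,\mathbf 1_{k^2\mid g}$. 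Summing over all tuples and swapping sums gives
$$A_n(H) = \sum_{k\geq 1}\mu(k)\,\#\{(a_0,\dots,a_n): k^2\mid \gcd(a_0,\dots,a_n)\} + O(H^n) = \sum_{k\geq 1}\mu(k)\Big((2H/k^2)^{n+1}+O((H/k^2)^n)\Big)+O(H^n).$$
The main term is then $2^{n+1}H^{n+1}\sum_{k\geq 1}\mu(k)/k^{2(n+1)} = \dfrac{2^{n+1}}{\zeta(2(n+1))}\,H^{n+1}$, and the error from the tail of the $k$-sum together with the remainder terms is $O(H^n\log H)$ (or $O(H^n)$ with a slightly more careful estimate of $\sum_k 1/k^{2n}$, which converges since $n\geq 2$). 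Hence one may take $C_n = 2^{n+1}/\zeta(2(n+1)) > 0$.

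**Main obstacle.** The only genuinely delicate point is justifying the interchange of summations and controlling the tail: one must check that the series $\sum_k \mu(k)(2H/k^2)^{n+1}$ and the accumulated error terms converge and that truncating at $k\leq \sqrt{H}$ (beyond which $k^2 > H$ and the inner count is essentially constant or the floor introduces lower-order noise) costs only an admissible error. Since $2(n+1)\geq 6$ for $n\geq 2$, all the relevant zeta-type series converge absolutely and this causes no trouble; the argument is genuinely ``almost identical'' to \cite[\S1]{Nym72} with $\mathbf 1_{\gcd=1}=\sum_{d\mid\gcd}\mu(d)$ replaced by $\mathbf 1_{\gcd\text{ squarefree}}=\sum_{k^2\mid\gcd}\mu(k)$, so in the final write-up I would simply point to that lemma and record the modified constant $C_n = 2^{n+1}/\zeta(2(n+1))$, noting that positivity is immediate since $\zeta(2(n+1))$ is a finite positive real.
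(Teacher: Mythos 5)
Your argument is correct, but it follows a different route from the paper's. The paper never sieves with $\mu$ over square divisors: it stratifies the tuples by the value $k$ of their gcd (necessarily squarefree), so that $f(n,H)=\sum_{k\le H,\ k\ \mathrm{squarefree}} Z(n,H/k)$ with $Z$ the coprime count of Lemma \ref{lemma nt 1}, then splits the sum at $k=\sqrt{H}$, applying Lemma \ref{lemma nt 1} for $k\le\sqrt H$ and a crude box bound for $k>\sqrt H$; this yields $C_n=\frac{2^{n+1}}{\zeta(n+1)}\sum_{k\ \mathrm{squarefree}}k^{-(n+1)}$. You instead detect squarefreeness of the gcd directly via $\mathbf 1_{\mathrm{squarefree}}(g)=\sum_{k^2\mid g}\mu(k)$ and count lattice points divisible by $k^2$, which bypasses Lemma \ref{lemma nt 1} entirely and produces the closed-form constant $C_n=2^{n+1}/\zeta(2(n+1))$; the two constants agree since $\sum_{k\ \mathrm{squarefree}}k^{-(n+1)}=\zeta(n+1)/\zeta(2n+2)$. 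Your version is self-contained and even gives a power-saving error term, while the paper's version recycles the already-stated coprime count in the spirit of Nyman's argument. The only point to tighten in a write-up is the uniformity of $\bigl(2\lfloor H/k^2\rfloor+1\bigr)^{n+1}=(2H/k^2)^{n+1}+O\bigl((H/k^2)^n\bigr)$, which is only valid for $k^2\le H$; as you note, for $k^2>H$ the only tuple divisible by $k^2$ in the box is the zero tuple, which is excluded, so the $k$-sum truncates at $k\le\sqrt H$ and the interchange of summations is over a finite range, making the argument complete (and the error is indeed $O(H^n)$ since $\sum_k k^{-2n}$ converges).
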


\begin{proof}
Given a positive real number $H$, set $$Z(n,H)=|\{(a_0, \dots, a_n) \in \llbracket -H, H \rrbracket^{n+1} : a_n \not=0, \, {\rm{gcd}}(a_0,\dots,a_n)=1\}|$$ and, for short, denote 
$$|\{(a_0, \dots, a_n) \in \llbracket -H, H \rrbracket^{n+1} : a_n \not=0, \, {\rm{gcd}}(a_0,\dots,a_n) \, \, is \, \, squarefree\}| $$
by $f(n,H).$ One then has
\begin{equation} \label{-5}
f(n,H)=\sum_{\substack{k=1 \\ k \, \, squarefree}}^H Z \bigg(n,\frac{H}{k} \bigg)= A(n,H)+B(n,H)
\end{equation}
with 
\begin{equation} \label{-4}
A(n,H)=\sum_{\substack{1 \leq k \leq \sqrt{H} \\ k \, \, squarefree}} Z \bigg(n,\frac{H}{k} \bigg)
\end{equation}
and
\begin{equation} \label{-3}
B(n,H)=\sum_{\substack{\sqrt{H} < k \leq {H} \\ k \, \, squarefree}} Z \bigg(n,\frac{H}{k} \bigg).
\end{equation}

First, we estimate the number $A(n,H)$. By \eqref{-4} and Lemma \ref{lemma nt 1}, one has
\begin{align*}
A(n,H) &= \sum_{\substack{1 \leq k \leq \sqrt{H} \\ k \, \, squarefree}} \bigg( \frac{2^{n+1}}{\zeta(n+1)} \frac{H^{n+1}}{k^{n+1}} + O \bigg(\frac{H^n}{k^n}\bigg) \bigg) \\
&=\bigg(\frac{2^{n+1}}{\zeta(n+1)} \sum_{\substack{1 \leq k \leq \sqrt{H} \\ k \, \, squarefree}} \frac{1}{k^{n+1}} \bigg) \cdot H^{n+1} + O(H^{n+(1/2)}).
\end{align*}
We then get
\begin{equation} \label{-1.5}
A(n,H) \sim C_n \cdot H^{n+1}, \quad H \to \infty.
\end{equation}
with
$$C_n= \frac{2^{n+1}}{\zeta(n+1)} \sum_{\substack{1 \leq k < \infty \\ k \, \, squarefree}} \frac{1}{k^{n+1}}>0.$$

Now, we estimate the quantity $B(n,H)$. By \eqref{-3}, one has
\begin{align*}
B(n,H) &\leq \sum_{\substack{\sqrt{H} < k \leq {H} \\ k \, \, squarefree}} \bigg(2 \cdot \frac{H}{k} + 1 \bigg)^n \cdot 2 \cdot \frac{H}{k} \\
& \leq H \cdot (2 \sqrt{H} + 1)^n \cdot (2 \sqrt{H}).
\end{align*}
Hence 
\begin{equation} \label{-1}
B(n,H)=O(H^{(n/2)+(3/2)}), \quad H \to \infty.
\end{equation}

As $n \geq 2$, one has $(n/2)+(3/2) < n+1$. Combining \eqref{-5}, \eqref{-1.5} and \eqref{-1} then provides
$$f(n,H) \sim C_n \cdot H^{n+1}, \quad H \to \infty,$$
as needed for the lemma.
\end{proof}

Now, we estimate $|T(N,H)|$ as $H$ tends to $\infty$. Given a positive real number $H$, the cardinality $|T(N,H)|$ is equal to the number of separable polynomials $P(T) \in \Zz[T]$ that have height at most $H$, squarefree content and such that the extension $\Qq(T)(\sqrt{P(T)})/\Qq(T)$ has $N$ branch points. By (the proof of) Lemma \ref{lemma 1}, the fact that $P(T)$ is separable and as the number of branch points of a $\Qq$-regular quadratic extension of $\Qq(T)$ is even, $\Qq(T)(\sqrt{P(T)})/\Qq(T)$ has $N$ branch points if and only if $P(T)$ has degree $N$ or $N-1$ (as $N$ is even). Then 
\begin{equation} \label{1.1}
|T(N,H)|= C(N,H) + D(N,H)
\end{equation}
where 

\noindent
- $C(N,H)$ is the number of polynomials $P(T) \in \Zz[T]$ that are separable, that have degree $N$, squarefree content and height at most $H$,

\noindent
- $D(N,H)$ is the number of separable polynomials $P(T) \in \Zz[T]$ that have degree $N-1$, squarefree content and height at most $H$.

By Lemma \ref{lemma nt 2}, one has
\begin{equation} \label{1.2}
C(N,H) \sim C_N \cdot H^{N+1}, \quad H \to \infty,
\end{equation}
for some positive constant $C_N$ and, clearly, one has
\begin{equation} \label{1.3}
D(N,H) = O(H^N), \quad H \to \infty.
\end{equation}
It then remains to combine \eqref{1.1}, \eqref{1.2} and \eqref{1.3} to get
\begin{equation} \label{eq 1}
|T(N,H)| \sim C_N \cdot H^{N+1}, \quad H \to \infty.
\end{equation}

Next, we estimate $|T'(N,H)|$ as $H$ tends to $\infty$. Let $P(T) \in \Zz[T]$ be a separable polynomial. By Proposition \ref{para}, there exists a set of prime numbers that has positive density and that satisfies condition ($***$) (with respect to the quadratic extension $\Qq(T)(\sqrt{P(T)})/\Qq(T)$) if the degree of the polynomial $P(T)$ is equal to $N$ and the Galois group of $P(T)$ over $\Qq$ is isomorphic to $S_N$ (when acting on the roots of $P(T)$). Then, given a real number $H \geq 1$, one has 
\begin{equation} \label{2}
|T'(N,H)| \geq C(N,H) - E(N,H)
\end{equation}
where $C(N,H)$ is defined after \eqref{1.1} and $E(N,H)$ is the number of separable polynomials $P(T) \in \Zz[T]$ with degree $N$, height at most $H$, squarefree content and whose Galois group is isomorphic to a proper subgroup of $S_N$. By \cite[Theorem 2.1]{Coh81b}, one has
\begin{equation} \label{4}
E(N,H)= O(H^{N+(1/2)} \cdot \log(H)), \quad H \to \infty.
\end{equation}

Finally, by \eqref{2}, one has
\begin{equation} \label{7}
1 - \frac{|T'(N,H)|}{|T(N,H)|} \leq 1- \frac{C(N,H)}{|T(N,H)|} + \frac{E(N,H)}{|T(N,H)|}.
\end{equation}
for each real number $H \geq 1$. Combining \eqref{1.1}, \eqref{1.3} and \eqref{eq 1} provides
\begin{equation} \label{5}
1- \frac{C(N,H)}{|T(N,H)|} = O \bigg(\frac{1}{{H}} \bigg), \quad H \to \infty,
\end{equation}
and, by \eqref{eq 1} and \eqref{4}, one has
\begin{equation} \label{6}
\frac{E(N,H)}{|T(N,H)|} = O \bigg(\frac{\log(H)}{\sqrt{H}} \bigg), \quad H \to \infty.
\end{equation}
It then remains to combine \eqref{7}, \eqref{5} and \eqref{6} to get
$$1 - \frac{|T(N,H)|}{|T'(N,H)|} =  O \bigg(\frac{\log(H)}{\sqrt{H}} \bigg)$$
as $H$ tends to $\infty$, 
thus ending the proof of Proposition \ref{para 3}.

\bibliography{Biblio2}
\bibliographystyle{alpha}

\end{document}